\newcommand*{\MRref}[2]{ \href{http://www.ams.org/mathscinet-getitem?mr=#1}{MR #1}}
\newcommand*{\arxiv}[1]{\href{http://www.arxiv.org/abs/#1}{arXiv: #1}}
\numberwithin{equation}{section}
\theoremstyle{plain}
\newtheorem{theorem}[equation]{Theorem}
\newtheorem{lemma}[equation]{Lemma}
\newtheorem{proposition}[equation]{Proposition}
\newtheorem{corollary}[equation]{Corollary}
\theoremstyle{definition}
\newtheorem{definition}[equation]{Definition}
\theoremstyle{remark}
\newtheorem{example}[equation]{Example}
\DeclareMathOperator{\coker}{coker}
\DeclareMathOperator{\Aut}{Aut}
\DeclareMathOperator{\Ad}{Ad}
\newcommand*{\nb}{\nobreakdash}
\newcommand*{\Star}{$^*$\nobreakdash-}
\newcommand*{\source}{\textup s}
\newcommand*{\range}{\textup r}
\newcommand*{\C}{\mathbb C}
\newcommand*{\Z}{\mathbb Z}
\newcommand*{\R}{\mathbb R}
\newcommand*{\Torus}{\mathbb T}
\newcommand*{\Comp}{\mathbb K}
\newcommand*{\Cst}{\textup C^*}
\newcommand*{\Cont}{\textup C}
\newcommand*{\Contc}{\textup C_\textup c}
\newcommand*{\Mult}{\mathcal M}
\newcommand*{\Bisec}{\mathcal S}
\newcommand*{\Rotc}[1]{\Cst(\Torus_#1)}
\newcommand*{\dd}{\textup d}
\newcommand*{\ima}{\textup i}
\newcommand*{\Eul}{\textup e}
\newcommand*{\Id}{\textup{Id}}
\newcommand*{\E}{\mathcal E} 
\newcommand*{\U}{\mathcal U} 
\newcommand*{\cm}{\mathcal{C}}
\newcommand*{\acm}{c}
\newcommand*{\tcm}{\partial}
\newcommand*{\defeq}{\mathrel{\vcentcolon=}}
\newcommand*{\into}{\rightarrowtail}
\newcommand*{\onto}{\twoheadrightarrow}
\newcommand*{\inv}{^{-1}}
\newcommand*{\abs}[1]{\lvert#1\rvert}
\newcommand*{\norm}[1]{\lVert#1\rVert}
\newcommand*{\cl}[1]{\overline{#1}}
\newcommand*{\braket}[2]{\left\langle#1\!\mid\!#2\right\rangle}
\begin{document}
\title{Non-Hausdorff Symmetries of $\Cst$-algebras}

\author{Alcides Buss}
\email{alcides@mtm.ufsc.br}
\address{Departamento de Matem\'atica\\
 Universidade Federal de Santa Catarina\\
 88.040-900 Florian\'opolis-SC\\
 Brazil}

\author{Ralf Meyer}
\email{rameyer@uni-math.gwdg.de}
\address{Mathematisches Institut and Courant Research Centre ``Higher Order Structures''\\
 Georg-August-Universit\"at G\"ottingen\\
 Bunsenstra\ss e 3--5\\
 37073 G\"ottingen\\
 Germany}

\author{Chenchang Zhu}
\email{zhu@uni-math.gwdg.de}
\address{Courant Research Centre ``Higher Order Structures''\\
 Georg-August-Universit\"at G\"ottingen\\
 Bunsenstra\ss e 3--5\\
 37073 G\"ottingen\\
 Germany}

\begin{abstract}
  Symmetry groups or groupoids of \(\Cst\)\nb-algebras associated to non-Hausdorff spaces are often non-Hausdorff as well.  We describe such symmetries using crossed modules of groupoids.  We define actions of crossed modules on \(\Cst\)\nb-algebras and crossed products for such actions, and justify these definitions with some basic general results and examples.
\end{abstract}
\subjclass[2000]{46L55, 18D05}
\keywords{\(\Cst\)\nb-algebra, crossed module, higher category theory, Green twisted action, locally Hausdorff groupoid, crossed product, Morita equivalence}

\thanks{Supported by the German Research Foundation (Deutsche Forschungsgemeinschaft (DFG)) through the Institutional Strategy of the University of G\"ottingen.}
\maketitle

\section{Introduction}
\label{sec:introduction}

Non-commutative geometry describes quotient spaces by non-commutative algebras, which here means \(\Cst\)\nb-algebras.  But symmetry groups of quotient spaces may be non-Hausdorff quotients as well.  Here we describe such non-Hausdorff symmetry groups and groupoids using crossed modules.  We define what it means for them to act on \(\Cst\)\nb-algebras, and we define crossed products for such actions.  Our definitions are motivated by higher category theory.  More precisely, they are special cases of general constructions with strict \(2\)\nb-categories.  To make this article easier to read for operator algebraists, we do not explicitly mention the general theory here, although it guides our definitions.

One motivating example is the gauge action on the rotation algebra~\(\Rotc\vartheta\) for an irrational number~\(\vartheta\).  This is the universal \(\Cst\)\nb-algebra generated by two unitaries \(U\) and~\(V\) that satisfy the commutation relation \(UV=\lambda VU\) with \(\lambda\defeq \exp(2\pi\ima\vartheta)\).  We may also view~\(\Rotc\vartheta\) as the crossed product \(C(\Torus)\rtimes_\lambda\Z\), where~\(\Z\) acts on~\(\Torus\) by \(n\cdot z\defeq \lambda^n\cdot z\).  Thus~\(\Rotc\vartheta\) describes the non-Hausdorff quotient space~\(\Torus/\lambda^\Z\), which is, in fact, a group.  We expect this group to act on itself by translations.  Thus we expect~\(\Rotc\vartheta\) to admit~\(\Torus/\lambda^\Z\) as a symmetry group.  The action of~\(\Torus\) is easy to describe: for \(z\in\Torus\), we have \(\alpha_z(U)= zU\) and \(\alpha_z(V)=V\).  This is one half of the familiar gauge action on~\(\Rotc\vartheta\).  The restriction of this action to the dense subgroup~\(\lambda^\Z\) is non-trivial, but inner: \(\alpha_\lambda(a) = V^*aV\) for all \(a\in\Rotc\vartheta\).

The above example is rather special because the group~\(\Torus\) is Abelian.  We generalise the above situation by considering \emph{crossed modules} of topological groupoids; in the introduction, we only explain the definition of crossed modules of topological groups for simplicity.  Crossed modules of discrete groups were introduced in homotopy theory to classify \(2\)\nb-connected spaces up to homotopy equivalence.  They are equivalent to strict \(2\)\nb-groups, which are central objects of study in higher category theory (see~\cites{Baez:Introduction_n, Noohi:two-groupoids}).  The crossed modules of topological groupoids that we introduce below are equivalent to strict topological \(2\)\nb-groupoids.

A crossed module of topological groups consists of two topological groups \(G\) and~\(H\) with a continuous group homomorphism \(\tcm\colon H\to G\) and a continuous left action~\(\acm\) of~\(G\) on~\(H\) by automorphisms, which satisfy the compatibility conditions
\begin{equation}
  \label{eq:crossed_module}
  \tcm\bigl(\acm_g(h)\bigr) = g \tcm(h)g^{-1}
  \qquad\text{and}\qquad
  \acm_{\tcm(h)}(k) = hkh^{-1}
\end{equation}
for all \(g\in G\), \(h,k\in H\).  For instance, \(H\) may be a closed normal subgroup in~\(G\), \(\tcm\colon H\to G\) the embedding, and~\(\acm\) the usual conjugation action \(\acm_g(h)\defeq ghg^{-1}\).  The crossed module \((G,H,\acm,\tcm)\) is a model for the possibly non-Hausdorff quotient group \(G/\tcm(H)\) in the same way that a locally compact groupoid is a model for its orbit space.

Actions of crossed modules on \(\Cst\)\nb-algebras are defined by copying the definition of twisted actions in the sense of Philip Green~\cite{Green:Local_twisted}.  Let \(\cm\defeq (G,H,\tcm,\acm)\) be a crossed module and let~\(A\) be a \(\Cst\)\nb-algebra.  An action of~\(\cm\) on~\(A\) consists of a strongly continuous action~\(\alpha\) of~\(G\) on~\(A\) and a strictly continuous homomorphism~\(u\) from~\(H\) to~\(\U\Mult(A)\), the unitary group of the multiplier algebra of~\(A\); the pair \((\alpha,u)\) is required to satisfy the two compatibility conditions
\begin{equation}
  \label{eq:action_crossed_module}
  \alpha_{\tcm(h)}(a) = u_hau_h^*
  \qquad\text{and}\qquad
  \alpha_g(u_h) = u_{\acm_g(h)}
\end{equation}
for all \(h\in H\), \(g\in G\), \(a\in A\).  The second condition uses the canonical extension of~\(\alpha_g\) to multipliers.

An interesting class of examples of such actions comes from crossed products.  Let~\(B\) be a \(\Cst\)\nb-algebra with a continuous action~\(\beta\) of~\(G\).  Then the crossed product \(A\defeq B\rtimes_{\beta\circ\tcm} H\) carries a canonical action of \((G,H,\tcm,\acm)\).  Let \(i_B\colon B\to\Mult(A)\) and \(i_H\colon H\to \U\Mult(A)\) be the canonical maps.  Define an action \(\alpha\colon G\to\Aut(A)\) by
\[
\alpha_g\bigl(i_B(b)\bigr) \defeq i_B\bigl(\beta_g(b)\bigr)
\qquad\text{and}\qquad
\alpha_g\bigl(i_H(h)\bigr) \defeq i_H\bigl(\acm_g(h)\bigr)
\]
for all \(g\in G\), \( b\in B\), \(h\in H\).  The pair \((\alpha,i_H)\) is an action of the crossed module \((G,H,\tcm,\acm)\) on~\(A\).

Let~\(D\) be another \(\Cst\)\nb-algebra.  A \emph{covariant representation} in \(\Mult(D)\) of an action \((\alpha,u)\) of a crossed module \((G,H,\tcm,\acm)\) on a \(\Cst\)\nb-algebra~\(A\) is a pair \((\pi,V)\) consisting of a non-degenerate \Star{}representation \(\pi\colon A\to\Mult(D)\) and a continuous group representation \(V\colon G\to\U\Mult(D)\), subject to the two compatibility conditions
\begin{equation}
  \label{eq:covariant_representation}
  \pi\bigl(\alpha_g(a)\bigr) = V_g\pi(a)V_g^*
  \qquad\text{and}\qquad
  \pi(u_h)=V_{\tcm(h)}
\end{equation}
for all \(g\in G\), \(a\in A\), \(h\in H\).  The \emph{crossed product} \(\Cst\)\nb-algebra \(A\rtimes_{(\alpha,u)}(G,H)\) is defined as the universal \(\Cst\)\nb-algebra for such covariant representations.  That is, non-degenerate \Star{}homomorphisms from \(A\rtimes_{(\alpha,u)}(G,H)\) to \(\Mult(D)\) correspond bijectively to covariant representations of the system in \(\Mult(D)\).  This universal property determines the crossed product uniquely.  We may construct it as a quotient of the usual crossed product \(A\rtimes_\alpha G\) because covariant representations of \((G,H,A,\alpha,u)\) are covariant representations of \((G,A,\alpha)\) that satisfy an extra condition involving~\(H\).

For the trivial action \(\alpha_g=\Id_\C\), \(u_h=1\) on~\(\C\), this yields the \(\Cst\)\nb-algebra of a crossed module.  Since covariant representations of this trivial system correspond bijectively to continuous representations of the quotient group \(G/\tcm(H)\), we get the somewhat disappointing result that \(\Cst(G,H)\) is the group \(\Cst\)\nb-algebra of the Hausdorff quotient \(G/\overline{\tcm(H)}\).  For the crossed module \(\Z\to\Torus\) that acts on~\(\Rotc\vartheta\), we simply get~\(\C\) because~\(\lambda^\Z\) is dense in~\(\Torus\).  Nevertheless, the above definition of the crossed product seems to be the correct one because it has several expected properties that we explain in the following paragraphs.

Let~\(B\) be a \(\Cst\)\nb-algebra with a continuous action~\(\beta\) of~\(G\).  We have already defined an action \((\alpha,i_H)\) of \((G,H,\tcm,\acm)\) on the crossed product \(B\rtimes_{\beta\circ\tcm}H\).  The crossed product for this action is
\begin{equation}
  \label{eq:crossed_product_stages}
  (B\rtimes_{\beta\circ\tcm}H)\rtimes_{\alpha,i_H}(G,H)\cong B\rtimes_\beta G,
\end{equation}
as expected.  For example, this applies to the canonical action of \((\Torus,\Z)\) on the rotation algebra~\(\Rotc\vartheta\):
\[
\Rotc\vartheta\rtimes(\Torus,\Z) \cong C(\Torus)\rtimes\Torus \cong \Comp\bigl(L^2(\Torus)\bigr).
\]

We do not yet have an intrinsic definition of free and proper actions of \(2\)\nb-groups, but the action of \((G,H)\) on \(\Cont_0(G,D)\rtimes H\) induced by the free and proper translation action of~\(G\) on itself should be an example.  The isomorphism above specialises to
\[
\bigl(\Cont_0(G,D)\rtimes H\bigr)\rtimes(G,H)\cong
\Cont_0(G,D)\rtimes G \cong \Comp\bigl( L^2(G) \bigr) \otimes D,
\]
which is the expected result.

The crossed product is clearly functorial.  If two actions of \((G,H)\) are equivariantly Morita equivalent in a suitable sense, then their crossed products are Morita--Rieffel equivalent \(\Cst\)\nb-algebras.

The crossed product functor is exact in the following sense: if \((G,H,\tcm,\acm)\) acts on~\(A\) and \(I\subseteq A\) is invariant under the \(G\)\nb-action, then \((G,H)\) acts on~\(I\) and on~\(A/I\), and the resulting crossed products form a \(\Cst\)\nb-algebra extension
\[
I\rtimes(G,H) \into A\rtimes(G,H) \onto A/I\rtimes(G,H).
\]

Although we have only discussed crossed modules of groups, an important motivation for this article is that several constructions of groupoids, such as holonomy groupoids of foliations or groupoids of germs of a pseudogroup of transformations, only produce locally Hausdorff groupoids in general: the space is non-Hausdorff, but covered by Hausdorff open subsets.  According to the principles of non-commutative geometry, this non-Hausdorff space of arrows should be viewed as the orbit space of another groupoid.  Higher category theory is designed to treat such situations.  In the more general setting of weak \(2\)\nb-groupoids, it is indeed possible to write every locally Hausdorff groupoid as the truncation of a Hausdorff topological weak \(2\)\nb-groupoid.  Since weak \(2\)\nb-groupoids are considerably more complicated than strict \(2\)\nb-groupoids --~which correspond to crossed modules of groupoids~-- we only treat some rather special locally Hausdorff groupoids here.

Let~\(G\) be a Hausdorff \'etale groupoid and let \(H\subseteq G\) be the interior of the set of loops in~\(G\) (arrows with same source and target).  Then the quotient \(G/H\) is a locally Hausdorff, \'etale groupoid.  The pair \((G,H)\) together with the embedding \(H\to G\) and the conjugation action of~\(G\) on~\(H\) is a crossed module of topological groupoids.  The \(\Cst\)\nb-algebra \(\Cst(G,H)\) in this case agrees with the \(\Cst\)\nb-algebra for locally Hausdorff, globally non-Hausdorff groupoids described in \cite{Connes:Survey_foliations}*{\S6}.

The advantage of the crossed module \((G,H)\) over the quotient \(G/H\) is that it has more actions on \(\Cst\)\nb-algebras.  In particular, it has many actions such as the one on \(\Cont_0(G)\rtimes H\) that deserve to be called proper.  We plan to use this to carry over the Dirac dual Dirac method to suitable locally Hausdorff groupoids.  But crossed modules are not general enough to cover all locally Hausdorff \'etale groupoids, so that we only discuss one rather simple special case here.

Throughout this article, we use the following conventions and terminology.  Locally compact means locally compact and Hausdorff.  Given a groupoid~\(G\), \(G_x\) denotes the source fibre at~\(x\) and~\(G^y\) denotes the range fibre at~\(y\), and~\(G_x^y\) denotes the space of arrows from~\(x\) to~\(y\).

\section{Crossed modules}
\label{sec:crossed_modules}

\begin{definition}
  \label{def:crossed_module}
  A \emph{crossed module} of groups is a quadruple \((G,H,\tcm,\acm)\) consisting of two groups \(G\) and~\(H\) and group homomorphisms \(\tcm\colon H\to G\) and \(\acm\colon G\to\Aut(H)\) that satisfy~\eqref{eq:crossed_module}, that is, \(\tcm\bigl(\acm_g(h)\bigr) = g \tcm(h)g^{-1}\) and \(\acm_{\tcm(h)}(k) = hkh^{-1}\).
\end{definition}

These conditions imply that \(\tcm(H)\) is a normal subgroup in~\(G\) and that \(\ker \tcm\) is a central subgroup of~\(H\).

Conversely, crossed modules with injective~\(\tcm\) are the same thing as normal subgroups \(H\triangleleft G\); we will focus on examples of this kind in our applications.  Crossed modules with surjective~\(\tcm\) are the same thing as central extensions \(H\onto G\).  In general, a crossed module is a central extension of the normal subgroup \(\tcm(H)\) of~\(G\) together with a lifting of the conjugation action of~\(G\) on~\(\tcm(H)\) to~\(H\) that extends the conjugation action of~\(\tcm(H)\) on the central extension~\(H\).

In the following, we often drop \(\acm\) and~\(\tcm\) from our notation and write \((G,H,\tcm)\) or just \((G,H)\) for a crossed module.

\begin{example}
  \label{exa:crossed_module_Cstar_Aut_unitary}
  Let~\(A\) be a \(\Cst\)\nb-algebra.  Let \(G\defeq\Aut(A)\) be the group of automorphisms of~\(A\) and let \(H\defeq \U\Mult(A)\) be the group of unitary multipliers of~\(A\).  Let \(\Ad_u\in \Aut(A)\) for \(u\in\U\Mult(A)\) be the inner automorphism \(a\mapsto uau^*\), and let~\(\Aut(A)\) act on~\(\U\Mult(A)\) by extending automorphisms of~\(A\) to \(\Mult(A)\) and then restricting them to \(\U\Mult(A)\).  This satisfies the conditions in~\eqref{eq:crossed_module}, so that we get a crossed module
  \[
  \Aut_2(A) \defeq (\Aut(A),\U\Mult(A),\Ad).
  \]
  The map~\(\Ad\) is, in general, neither injective nor surjective: its kernel consists of the central unitary multipliers, its range is the normal subgroup of inner automorphisms in \(\Aut(A)\).
\end{example}

Crossed modules were introduced by J.\,H.\,C. Whitehead~\cite{Whitehead:Combinatorial_homotopy_II} as algebraic models of homotopy \(2\)\nb-types.  This generalises the well-known fact that a connected \emph{aspherical} CW-complex is determined uniquely up to homotopy by its fundamental group.  Any crossed module \(\cm=(G,H,\tcm,\acm)\) has a classifying space~\(B\cm\), which is a certain connected CW-complex with \(\pi_1(B\cm) = \coker\tcm\) and \(\pi_2(B\cm)=\ker \tcm\), and \(\pi_n(B\cm)=0\) for \(n\ge3\).  For any connected CW-complex~\(X\), there is an associated crossed module~\(\cm\) and a map \(X\to B\cm\) that induces an isomorphism on \(\pi_1\) and~\(\pi_2\) and thus is a homotopy equivalence if \(\pi_n(X)=0\) for \(n\ge3\).  Thus any homotopy \(2\)\nb-type is represented by the classifying space of some crossed module (see also \cites{MacLane-Whitehead:3-type, Noohi:two-groupoids}).

We also consider crossed modules of \emph{topological} groups, where \(G\) and~\(H\) are topological groups and \(\tcm\colon H\to G\) and \(\acm\colon G\to\Aut(H)\) are continuous; continuity of~\(\acm\) means that the map \(G\times H\to H\), \((g,h)\mapsto \acm_g(h)\) is continuous.

\begin{example}
  \label{exa:crossed_module_Torus_Z}
  Let \(G=\Torus \defeq \{z\in\C^\times\mid \abs{z}=1\}\) and \(H=\Z\) with the trivial action of~\(G\).  Let \(\lambda=\exp(2\pi\ima\vartheta)\) for some irrational number~\(\vartheta\) and define \(\tcm\colon H\to G\) to be the embedding \(n\mapsto\lambda^n\).  This is a crossed module of topological groups.
\end{example}

\begin{example}
  \label{exa:Green_crossed_module}
  Let~\(G\) be a topological group and let~\(H\) be a closed normal subgroup with the subspace topology.  Let \(\tcm\colon H\to G\) be the embedding and let \(\acm_g(h)\defeq ghg^{-1}\).  This is a crossed module of topological groups; both \(\tcm\) and~\(\acm\) are automatically continuous.
\end{example}

\begin{example}
  \label{exa:B_group_crossed_module}
  Let~\(G\) be the trivial group~\(\{1\}\) and let~\(H\) be an abelian topological group.  The group~\(\{1\}\) acts trivially on~\(H\) and~\(\tcm\) is the unique map \(H\to\{1\}\).  Then \((\{1\},H)\) is a crossed module of topological groups.
\end{example}

\begin{example}
  \label{exa:Poincare_2-group}
  Let \(H\into E\onto G\) be a group extension with abelian~\(H\).  Then the action of~\(E\) on~\(H\) by conjugation descends to an action of~\(G\) on~\(H\) because~\(H\) is abelian.  Together with the trivial group homomorphism \(\tcm\colon H\to G\), we get a crossed module \((G,H,\tcm,\acm)\).

  If \(G=\textup{SO}(3,1)\) is the Lorentz group, \(H=\R^4\) is Minkowski space, and~\(E\) is the Poincar\'e group, this yields the so-called \emph{Poincar\'e \(2\)\nb-group}~\cite{Crane-Sheppeard:Poincare}.
\end{example}

The relationship between a crossed module \((G,H,\tcm,\acm)\) and the quotient group \(G/\tcm(H)\) is similar to the relationship between the groupoid \(X\rtimes G\) and the orbit space \(X/G\) for a group action of a group~\(G\) on a space~\(X\).  In particular, if~\(\tcm\) is a homeomorphism onto a closed normal subgroup of~\(G\), so that the left translation action of~\(H\) on~\(G\) is free and proper, then the crossed module \((G,H,\tcm,\acm)\) is equivalent to the quotient group \(G/\tcm(H)\) in a suitable sense, which we do not discuss here because this becomes clearer in the setting of higher category theory.

\begin{definition}
  \label{def:crossed_module_groupoid}
  A \emph{crossed module of topological groupoids} is a quadruple \((G,H,\tcm,\acm)\), where~\(G\) is a topological groupoid, \(H\) is a bundle of topological groups over the object space~\(X\) of~\(G\), \(\tcm\colon H\to G\) is a continuous homomorphism, and \(\acm\colon G \to\Aut(H)\) is a continuous action, such that
  \begin{alignat*}{2}
    \tcm\bigl(\acm_g(h)\bigr) &= g \tcm(h) g^{-1}&\qquad&
    \text{for all \((g, h) \in G\times_{\source, X} H\),}\\
    \acm_{\tcm(h)}(k) &= h k h^{-1}&\qquad& \text{for all \((h, k)\in H\times_X H\).}
  \end{alignat*}
  The action~\(\acm\) means that~\(\acm_g\) for \(g\in G_x^y\) is a group isomorphism from~\(H_x\) to~\(H_y\), and the map \(G\times_X H\to H\), \((g,h)\mapsto \acm_g(h)\) is continuous.  The equations \(\tcm\bigl(\acm_g(h)\bigr) = g \tcm(h) g^{-1}\) and \(\acm_{\tcm(h)}(k) = h k h^{-1}\) only make sense if \(g\in G_x\), \(h,k\in H_x\) for the same \(x\in X\).
\end{definition}

\begin{example}
  \label{exa:crossed_module_groupoid_isotropy}
  Let~\(G\) be a (Hausdorff) \'etale groupoid.  Recall that any \(g\in G\) extends to a local bisection~\(\bar{g}\) between open neighbourhoods of \(\source(g)\) and \(\range(g)\).  Let \(H\subseteq G\) be the set of all \(g\in G\) for which~\(\bar{g}\) is the identity map on a neighbourhood of~\(\source(g)\).  This subset is open and is a bundle of groups over the object space of~\(G\).  We call~\(H\) the \emph{interior isotropy bundle} of~\(G\).  More precisely, \(H\) is the interior of the closed subset of all \(g\in G\) with \(\range(g)=\source(g)\).  We equip~\(H\) with the subspace topology from~\(G\) and let \(\tcm\colon H\to G\) be the embedding.  If \(g\in G\) and \(h\in H_x\) are composable, then we have \(ghg^{-1} \in H_{gx}\).  This defines a continuous action of~\(G\) on~\(H\).  The data above defines a crossed module of topological groupoids.  This crossed module is a replacement for the groupoid of germs \(G/H\).  When~\(G\) is Hausdorff, the crossed module is always a crossed module of Hausdorff groupoids even though the quotient~\(G/H\) is locally Hausdorff but not necessarily Hausdorff.
\end{example}

\section{Actions of crossed modules on \texorpdfstring{$\Cst$}{C*}-algebras}
\label{sec:actions_crossed}

Let \(\cm\defeq (G,H,\tcm,\acm)\) be a crossed module of topological groups and let~\(A\) be a \(\Cst\)\nb-algebra.  Recall that automorphisms and unitary multipliers of~\(A\) form a crossed module \((\Aut(A),\U\Mult(A),\Ad)\) (Example~\ref{exa:crossed_module_Cstar_Aut_unitary}); even more, this is a crossed module of topological groups.

\begin{definition}
  \label{def:crossed_module_group_action}
  A (continuous) \emph{action} of~\(\cm\) on~\(A\) is a morphism of topological crossed modules from~\(\cm\) to \((\Aut(A),\U\Mult(A),\Ad)\).  That is, it is a pair of continuous group homomorphisms \(\alpha\colon G\to\Aut(A)\), \(u\colon H\to\U\Mult(A)\) such that \(\alpha_{\tcm(h)} = \Ad(u_h)\) for all \(h\in H\) and \(\alpha_g(u_h) = u_{\acm_g(h)}\) for all \(g\in G\), \(h\in H\).
\end{definition}

\begin{example}
  \label{exa:action_Green}
  Let~\(G\) be a locally compact group and let~\(H\) be a closed normal subgroup of~\(G\).  Actions of the resulting crossed module \(H\to G\) are, by definition, the twisted covariant systems introduced by Philip Green in~\cite{Green:Local_twisted}.
\end{example}

\begin{example}
  \label{exa:action_on_crossed}
  Let \(\cm=(G,H,\tcm,\acm)\) be a crossed module of locally compact topological groups.  Let~\(B\) be a \(\Cst\)\nb-algebra with a continuous action~\(\beta\) of~\(G\).  We describe a canonical action of \((G,H,\tcm,\acm)\) on the crossed product \(A\defeq B\rtimes_{\beta\circ\tcm} H\) (see~\eqref{eq:crossed-product}, where we briefly recall the definition of the crossed product).

  Let \(i_B\colon B\to\Mult(A)\) and \(i_H\colon H\to \U\Mult(A)\) be the canonical maps.  The action \(\alpha\colon G\to\Aut(A)\) is defined by
  \[
  \alpha_g\bigl(i_B(b)\bigr) \defeq i_B\bigl(\beta_g(b)\bigr), \qquad \alpha_g\bigl(i_H(h)\bigr) \defeq i_H\bigl(\acm_g(h)\bigr)
  \]
  for all \(g\in G\), \( b\in B\), \(h\in H\).  Notice that~\(\alpha_g\) preserves the commutation relation \(i_H(h)i_B(b) = i_B\bigl(\beta_{\tcm(h)}(b)\bigr) i_H(h)\):
  \begin{multline*}
    \alpha_g\bigl(i_H(h)i_B(b)\bigr) = i_H\bigl(\acm_g(h)\bigr) i_B\bigl(\beta_g(b)\bigr) = i_B\bigl(\beta_{\tcm\circ\acm_g(h)}\beta_g(b)\bigr) i_H\bigl(\acm_g(h)\bigr) \\= i_B\bigl(\beta_{g\tcm(h)g^{-1}g}(b)\bigr) i_H\bigl(\acm_g(h)\bigr) = \alpha_g\Bigl(i_B\bigl(\beta_{\tcm(h)}(b)\bigr)\Bigr) \alpha_g\bigl(i_H(h)\bigr).
  \end{multline*}
  The pair \((\alpha,i_H)\) is an action of the crossed module \((G,H,\tcm,\acm)\) on~\(A\).
\end{example}

\begin{example}
  \label{exa:action_Torus_Z}
  Now we specialise Example~\ref{exa:action_on_crossed} to the crossed module \((\Torus,\Z)\) in Example~\ref{exa:crossed_module_Torus_Z} and the action of~\(\Torus\) on \(B\defeq \Cont(\Torus)\) induced by the translation action.  Then \(B\rtimes_{\beta\circ\tcm} \Z\) is the irrational rotation algebra~\(\Rotc\vartheta\) and the action of \((\Torus,\Z)\) on it is given by \(\alpha_z(U) = zU\), \(\alpha_z(V)=V\) for all \(z\in\Torus\) and \(u_n=V^{-n}\) for all \(n\in\Z\).
\end{example}

\begin{example}
  \label{exa:action_B_group}
  Let~\(H\) be a locally compact abelian group and let~\(\hat{H}\) be the Pontrjagin dual of~\(H\).  We claim that actions on \(\Cst\)\nb-algebras of the crossed module \((\{1\},H)\) introduced in Example~\ref{exa:B_group_crossed_module} are equivalent to \(\Cst\)\nb-algebras over~\(\hat{H}\), that is, to \(\Cont_0(\hat{H})\)\nb-\(\Cst\)-algebras.

  By definition, an action of \((\{1\},H)\) on a \(\Cst\)\nb-algebra~\(A\) is a continuous group homomorphism from~\(H\) to \(Z\Mult(A)\), the centre of the multiplier algebra of~\(A\).  This is equivalent to a non-degenerate \Star{}homomorphism from \(\Cont_0(\hat{H}) \cong \Cst(H)\) to \(Z\Mult(A)\), that is, to a structure of \(\Cont_0(\hat{H})\)-\(\Cst\)-algebra on~\(A\).
\end{example}

\begin{example}
  \label{exa:action_Ab_extension}
  More generally, consider the crossed module \(\cm=(G,H,1,\acm)\) associated to an abelian group extension \(H\into E\onto G\) (see Example~\ref{exa:Poincare_2-group}).  The same argument as in Example~\ref{exa:action_B_group} shows that actions of~\(\cm\) are equivalent to actions of the transformation groupoid \(\hat{H}\rtimes G\), where we view the dual group~\(\hat{H}\) as a locally compact space and equip it with the induced action of~\(G\).  Recall that an action of \(\hat{H}\rtimes G\) on~\(A\) is equivalent to an action of~\(G\) together with a \(G\)\nb-equivariant structure of \(\Cont_0(\hat{H})\)-\(\Cst\)-algebra.

  In particular, actions of the Poincar\'e \(2\)\nb-group correspond to \(\textup{SO}(3,1)\)-equivariant \(\Cst\)\nb-algebras over \(\widehat{\R^4}\cong\R^4\).
\end{example}

Actions of crossed modules of topological groupoids are defined similarly, but the continuity requirements are harder to write down.

\begin{definition}
  \label{def:action_discrete_groupoid-cm_Cstar}
  Let \(\cm=(G,H,\tcm,\acm)\) be a crossed module of groupoids with object space \(X\).  Disregarding the issue of continuity for a moment, an \emph{action of~\(\cm\) on \(\Cst\)\nb-algebras} consists of a family of \(\Cst\)\nb-algebras \((A_x)_{x\in X}\), \Star{}isomorphisms \(\alpha_g\colon A_{\source(g)}\to A_{\range(g)}\) for all \(g\in G\) (\(\source(g)\) is the source and \(\range(g)\) the target of~\(g\)), and unitary multipliers \(u_h\in\Mult(A_x)\) for \(h\in H_x\), that satisfy analogous conditions: \(\alpha_{\tcm(h)} = \Ad(u_h)\) for all \(h\in H\) and \(\alpha_g(u_h) = u_{\acm_g(h)}\) for all \(g\in G\), \(h\in H_x\) with \(\source(g)=x\).
\end{definition}

We may interpret the above as a morphism of crossed modules of groupoids from \((G,H,\tcm,\acm)\) to a groupoid version of Example~\ref{exa:crossed_module_Cstar_Aut_unitary}, where we replace~\(\Aut(A)\) by the groupoid of \Star{}isomorphisms between \(\Cst\)\nb-algebras and the group~\(\U\Mult(A)\) by the bundle of groups with fibre \(\U\Mult(A)\) at the \(\Cst\)\nb-algebra~\(A\).

For simplicity, we only define continuity for actions of crossed modules of \emph{locally compact} topological groupoids, that is, the object space~\(X\) of~\(G\), the morphism space of~\(G\), and the total space of the bundle~\(H\) are now assumed to be locally compact spaces.  Using pro-\(\Cst\)-algebras instead of \(\Cst\)\nb-algebras, we could also treat compactly generated spaces instead of locally compact spaces.

We require the \(\Cst\)\nb-algebras \((A_x)_{x\in X}\) to be the fibres of a \(\Cst\)\nb-algebra~\(A\) over~\(X\) (this \(\Cst\)\nb-algebra over~\(X\) is part of our data).  We require the \Star{}isomorphisms \((\alpha_g)_{g\in G}\) to be the fibres of an isomorphism \(\alpha\colon \source^*A \to \range^*A\) of \(\Cst\)\nb-algebras over~\(G\), where we use the source and range maps to pull back~\(A\) to \(\Cst\)\nb-algebras over~\(G\) with fibres \(A_{\source(g)}\) and~\(A_{\range(g)}\), respectively.  And we require the \((u_h)_{h\in H}\) to be the fibres of a unitary multiplier~\(u\) of the pullback~\(p^*A\) of~\(A\) to~\(H\) along the bundle projection \(p\colon H\to X\) that maps~\(H_x\) to~\(x\).  The existence of \(\alpha\) and~\(u\) expresses the continuity of \((\alpha_g)\) and~\((u_h)\).

Since the map \(A\to \prod_{x\in X} A_x\) is injective for any \(\Cst\)\nb-algebra over~\(X\), \(\alpha\) and~\(u\) are unique if they exist at all.  Hence we do not have to specify them as additional data.  But~\(A\) and the structure of \(\Cst\)\nb-algebra over~\(X\) on~\(A\) are not yet determined by the fibres \((A_x)_{x\in X}\), so that we must specify this as additional data.

With the above definition, Example~\ref{exa:action_on_crossed} extends to locally compact groupoids with Haar system.  Let \((G,H,\tcm,\acm)\) be a crossed module of locally Hausdorff locally compact groupoids with Haar systems (see~\cite{Paterson:Groupoids}) and let~\(B\) carry a continuous action~\(\beta\) of~\(G\).  In particular, \(B\) is a \(\Cst\)\nb-algebra over the object space~\(X\) of~\(G\).  So is the crossed product \(B\rtimes_{\beta|_H}H\) --~with fibres \(\bigl(B\rtimes_{\beta|_H}H\bigr)_x=B_x\rtimes_{\beta|_{H_x}}H_x\)~-- because~\(H\) is a bundle of groups over~\(X\).  Let \(\Contc(H,B)\) be the space of continuous, compactly supported sections of the pull-back of~\(B\) to~\(H\); this is a dense subalgebra of \(B\rtimes_{\beta|_H}H\).  The pull-backs \(\source^*\Contc(H,B)\) and \(\range^*\Contc(H,B)\) are the spaces of compactly supported sections of bundles over~\(G\) with fibres \(\Contc(H_{\source(g)},B_{\source(g)})\) and \(\Contc(H_{\range(g)},B_{\range(g)})\) at \(g\in G\), respectively.  For \(g\in G\), we define an automorphism \(\alpha\colon \source^*\Contc(H,B) \to \range^*\Contc(H,B)\) by \((\alpha f)(g,h) \defeq \beta_g\bigl(f(g,\acm_g^{-1}(h))\bigr)\) for all \(g\in G\), \(h\in H\).  The action~\(\alpha\) is continuous in the inductive limit topology and hence extends to a continuous action on the \(\Cst\)\nb-level.  The continuity of the map \(i_H\colon H\to \U\Mult(p^*A)\) is trivial to verify.  Thus~\(A\) carries a continuous action of the crossed module \((G,H,\tcm,\acm)\).

Let \(A\) and~\(B\) be \(\Cst\)\nb-algebras over~\(X\) and let \(A\otimes_X B\) denote the restriction to the diagonal \(X\subseteq X\times X\) of their minimal \(\Cst\)\nb-tensor product (the same assertions also hold for the maximal \(\Cst\)\nb-tensor product instead).  If \((\alpha,u)\) and \((\beta,v)\) are continuous actions of a crossed module of topological groupoids \((G,H,\tcm,\acm)\) on \(A\) and~\(B\), then \(A\otimes_X B\) carries a canonical action \((\alpha\otimes_X\beta,u\otimes_X v)\) of \((G,H,\tcm,\acm)\), called the \emph{diagonal action}.  It is defined by \((\alpha\otimes_X\beta)_g(a\otimes b) \defeq \alpha_g(a) \otimes \beta_g(b)\), \((u\otimes_X v)_h\cdot (a\otimes b) \defeq u_h\cdot a\otimes v_h\cdot b\) for all \(g\in G_x\), \(h\in H_x\), \(a\in A_x\), \(b\in B_x\).

The tensor product with the diagonal action defines a bifunctor on the category of \(\Cst\)\nb-algebras with \((G,H,\tcm,\acm)\)-action.  This bifunctor is associative and commutative, and the obvious action on~\(\Cont_0(X)\), where~\(X\) is the unit space of~\(G\) (defined by \(\alpha_gf(x)= f(g^{-1}x)\) for all \(g\in G\), \(x\in X\) and \(u_h=1\) for all \(h\in H\)) is a unit object.  Hence the category of \(\Cst\)\nb-algebras with \((G,H,\tcm,\acm)\)-action becomes a symmetric monoidal category.

\begin{example}
  \label{exa:tensor_B_group_action}
  Consider the crossed module \((\{1\},H)\) for a locally compact abelian group~\(H\).  We have seen in Example~\ref{exa:action_B_group} that actions of \((\{1\},H)\) correspond to \(\Cst\)\nb-algebras over the dual group~\(\hat{H}\).  The tensor product of actions of \((\{1\},H)\) corresponds to a tensor product for \(\Cst\)\nb-algebras over~\(\hat{H}\).  If \(A\) and~\(B\) are \(\Cst\)\nb-algebras over~\(\hat{H}\), then their tensor product \(A\otimes B\) in the usual sense is a \(\Cst\)\nb-algebra over \(\hat{H}\times\hat{H}\).  We use the map \(\Cont_0(\hat{H})\to\Cont_b(\hat{H}\times\hat{H})\) induced by the group structure of~\(\hat{H}\) to view \(A\otimes B\) as a \(\Cst\)\nb-algebra over~\(\hat{H}\) once again.  Thus the fibre of \(A\otimes B\) at~\(\xi\in\hat{H}\) is the space of \(\Cont_0\)\nb-sections of the field of \(\Cst\)\nb-algebras over~\(\hat{H}\) with fibre \(A_\eta\otimes B_{\eta^{-1}\xi}\) at \(\eta\in\hat{H}\).
\end{example}

\subsection{Actions of crossed modules on groupoids}
\label{sec:actions_on_groupoids}

We are going to define actions of crossed modules on groupoids and use them to induce actions on groupoid \(\Cst\)\nb-algebras.  As for the action on a groupoid, our definition is based on a crossed module associated to the groupoid:

\begin{example}
  \label{exa:crossed_module_groupoid}
  Let~\(K\) be a locally compact groupoid.  We define a crossed module \(\Aut_2(K) = (\Aut(K),\Bisec,\tcm,\acm)\) of topological groups that combines groupoid automorphisms and bisections (or inner automorphisms).  We let~\(\Aut(K)\) be the topological group of automorphisms of~\(K\) --~these are pairs of homeomorphisms \(\alpha^{(0)}\colon K^{(0)} \to K^{(0)}\) and \(\alpha^{(1)}\colon K^{(1)} \to K^{(1)}\) that intertwine the unit, range, source, and multiplication maps in~\(K\); we equip~\(\Aut(K)\) with the compact open topology.

  A global bisection of~\(K\) is a closed subset \(S\subseteq K^{(1)}\) such that both \(\range\) and~\(\source\) restrict to homeomorphisms \(S\to K^{(0)}\).  Equivalently, we may describe~\(S\) as the graph of a section \(h\colon K^{(0)}\to K^{(1)}\) of the source map, that is, \(\source\circ h = \Id_{K^{(0)}}\), letting \(h(x)\) be the unique element of~\(S\) with \(\source\bigl(h(x)\bigr) = x\).  Global bisections correspond to sections~\(h\) of~\(\source\) for which \(\range\circ h\colon K^{(0)}\to K^{(0)}\) is a homeomorphism.  We topologise the space of global bisections using the compact open topology for maps \(K^{(0)}\to K^{(1)}\).  There is a unit bisection \(K^{(0)}\subseteq K^{(1)}\) and if \(S\) and~\(T\) are global bisections, so are \(S^{-1}\) and \(S\cdot T\); when we pass to sections \(h_S\colon K^{(0)}\to K^{(1)}\) and \(h_T\colon K^{(0)}\to K^{(1)}\), we get \(h_{S^{-1}}(x) = h_S\bigl((\range h_S)^{-1}(x)\bigr)^{-1}\) and \(h_{ST}(x) = h_S\bigl(\range h_T(x)\bigr)\cdot h_T(x)\):
  \[
  x \xleftarrow{h_S\bigl((\range h_S)^{-1}(x)\bigr)} (\range h_S)^{-1}(x),\qquad x \xrightarrow{h_T(x)} \range h_T(x) \xrightarrow{h_S\bigl(\range h_T(x)\bigr)} \range h_S \bigl(\range h_T(x)\bigr).
  \]
  As a result, global bisections form a topological group~\(\Bisec(K)\).  Obviously, the automorphism group~\(\Aut(K)\) acts by group automorphisms on the group of bisections~\(\Bisec(K)\).

  A global bisection~\(S\) generates an automorphism of~\(K\) by \(\alpha^{(0)}(x) = \range h_S(x)\), \(\alpha^{(1)}(k) = h_S\bigl(\range(k)\bigr) \cdot k \cdot h_S\bigl(\source(k)\bigr)^{-1}\) for \(x\in K^{(0)}\), \(k\in K^{(1)}\).  The following is easy to check: this map is indeed an automorphism of~\(K\); the resulting map \(\tcm\colon \Bisec(K)\to \Aut(K)\) is a group homomorphism; together with the obvious action of~\(\Aut(K)\) on~\(\Bisec(K)\), this yields a crossed module of topological groups.

  To motivate the above construction, we may interpret groupoids as categories and automorphisms of groupoids as invertible functors.  Then a global bisection~\(h\) becomes a natural isomorphism from~\(g\) to \(\tcm(h)\circ g\) for any automorphism~\(g\) of~\(K\), and all natural isomorphisms between automorphisms of~\(K\) are of this form for some global bisection~\(h\).
\end{example}

\begin{example}
  \label{exa:Aut_of_group}
  If~\(K\) is a group, that is, \(K^{(0)}\) is a point, then~\(\Aut(K)\) is the group of group automorphisms of~\(K\) in the usual sense, \(\Bisec(K)=K\), and \(\tcm\colon K\to\Aut(K)\) maps \(k\in K\) to the associated inner automorphism of~\(K\).
\end{example}

\begin{example}
  \label{exa:Aut_space}
  If~\(K\) is a topological space viewed as a groupoid with only identity morphisms, then \(\Aut_2(K)\) is the topological group of homeomorphisms \(K\to K\); the group of bisections is trivial.
\end{example}

\emph{An action of a crossed module of topological groups} \((G,H,\tcm,\acm)\) on a locally compact groupoid~\(K\) is defined as a continuous morphism of crossed modules to the crossed module \(\Aut_2(K)=(\Aut(K),\Bisec(K))\) constructed in Example~\ref{exa:crossed_module_groupoid}.  That is, \(G\) acts on~\(K\) by automorphisms and~\(H\) by global bisections, satisfying the usual two compatibility conditions.

\emph{Actions of crossed modules of topological groupoids} are notationally more complicated.  Let \(\cm=(G,H,\tcm,\acm)\) be a crossed module of topological groupoids with object space~\(X\), let \(\pi\colon H\to X\) be the bundle projection and let~\(K\) be a topological groupoid with a continuous groupoid morphism \(\rho\colon (K^{(1)}\rightrightarrows K^{(0)}) \to (X \rightrightarrows X) \).  Roughly speaking, \(K\) is a continuous field of groupoids over~\(X\) whose fibre~\(K_x\) at \(x\in X\) is the restriction of~\(K\) to \(\rho^{-1}(x) \subseteq K^{(0)}\).  We require~\(G\) to act on this field by groupoid isomorphisms, that is, we are given groupoid isomorphisms \(\alpha_g\colon K_{\source(g)}\to K_{\range(g)}\) for all \(g\in G\).  These are continuous in the sense that they piece together to continuous maps \(G\times_{\source,\rho} K^{(j)} \to G\times_{\range,\rho} K^{(j)}\) for \(j=0,1\), that is, both on objects and arrows.  Furthermore, we are given global bisections \(\kappa_h\colon K_x^{(0)}\to K_x^{(1)}\) for all \(x\in X\), \(h\in H_x\).  These are continuous in the sense that they combine to a continuous map \(H\times_{\pi,\rho} K^{(0)} \to K^{(1)}\).  The compatibility conditions for a crossed module action require \(\alpha_{\tcm(h)}(k) = \kappa_h\bigl(\range(k)\bigr)\cdot k\cdot \kappa_h\bigl(\source(k)\bigr)^{-1}\) and \(\kappa_{\acm_g(h)}(y) = \alpha_g\bigl( \kappa_h(\alpha_g^{-1}(y))\bigr)\) for all \(g\in G\), \(h\in H_{\source(g)}\), \(y\in K_{\range(g)}^{(0)}\).

In other words, we form the transformation groupoid \(H\ltimes_\tcm G\) for the left action of~\(H\) on the space of arrows in~\(G\) by \(h\cdot g \defeq \tcm(h) g\); then an action of \((G, H, \tcm, \acm)\) on~\(K\) is a groupoid morphism \((H\ltimes_\tcm G) \times_X K \to K\) satisfying the associativity condition as for usual actions.

Actions on groupoids should play the same role for crossed modules as actions on spaces for groupoids.  In particular, the classifying space or the universal proper action of a crossed module should be such an action on a groupoid.

\begin{example}
  \label{exa:action_on_equivalence_relation}
  Consider the special case where the groupoid~\(K\) comes from an equivalence relation~\(\sim\) on~\(K^{(0)}\), that is, \(K_x^x=\{1_x\}\) for all \(x\in K^{(0)}\).  Here the structure above simplifies considerably because everything is already determined by the action of~\(G\) on the object space~\(K^{(0)}\) of~\(K\).  We disregard continuity, this has to be checked directly both on \(K^{(0)}\) and~\(K^{(1)}\).

  Let \(\rho\colon K^{(0)}\to X\) be a map to the common object space of \(G\) and~\(H\).  This descends to \(K^{(1)}\backslash K^{(0)}\) if and only if \(x\sim y\) implies \(\rho(x)=\rho(y)\).  If this is the case, we let~\(K_x\) be the restriction of~\(K\) to the pre-image of \(x\in X\).  A bijection \(\alpha\colon K_x^{(0)}\to K_y^{(0)}\) extends to a groupoid isomorphism \(K_x\to K_y\) if and only if \(\alpha(a)\sim\alpha(b)\) for all \(a, b\in K_x^{(0)}\) with \(a\sim b\), and this groupoid isomorphism is determined uniquely.  A bisection~\(h\) must associate to each \(a\in K_x^{(0)}\) an element \(h(a)\in K_x^{(0)}\) with \(a\sim h(a)\).  The action of~\(H\) is implemented by bisections if and only if \(\alpha_h(a)\sim a\) for all \(h\in H_x\), \(a\in K_x^{(0)}\), and once again the bisections are determined uniquely.

  Summing up, an action of a crossed module \((G,H,\tcm,\acm)\) on~\(K\) is determined by an action of~\(G\) on~\(K^{(0)}\) that lifts the action on~\(X\) and that satisfies \(\alpha_g(a)\sim\alpha_g(b)\) for all \(g\in G_x\), \(a,b\in K_x^{(0)}\) with \(a\sim b\) and \(\alpha_{\tcm(h)}(a)\sim a\) for all \(h\in H_x\), \(a\in K_x^{(0)}\).
\end{example}

\begin{lemma}
  \label{lem:induced_action_groupoid_Cstar}
  Let~\(\cm\) be a crossed module of locally compact groupoids and let~\(K\) be a locally compact groupoid with Haar system.  Then an action of~\(\cm\) on~\(K\) induces an action on the groupoid \(\Cst\)\nb-algebra \(\Cst(K)\).
\end{lemma}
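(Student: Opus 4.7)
The plan is to construct the two pieces of a \(\cm\)-action on \(\Cst(K)\) directly from the given action on~\(K\), and then verify the two compatibility relations in~\eqref{eq:action_crossed_module}. I first observe that the groupoid morphism \(\rho\colon K\to X\) makes \(\Cst(K)\) a \(\Cst\)-algebra over~\(X\): since~\(K\) is a continuous field of locally compact groupoids over~\(X\), a Haar system on~\(K\) restricts to Haar systems on the fibres \(K_x\defeq \rho^{-1}(x)\), so \(\Cst(K)\) has fibres \(\Cst(K_x)\). This is the \(\Cst\)-algebra that should carry the \(\cm\)-action.

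Next I would build the \(G\)-action \(\alpha\). For \(g\in G_x^y\) the groupoid isomorphism \(\alpha_g\colon K_{\source(g)}\to K_{\range(g)}\) induces a \Star{}isomorphism \(\Cst(K_x)\to\Cst(K_y)\) by pushforward on \(\Contc\), namely \(\alpha_g^*(f)(k)\defeq f(\alpha_g^{-1}(k))\); this is multiplicative because~\(\alpha_g\) is a groupoid isomorphism and intertwines the Haar systems of the fibres. The continuity assumed in the definition of an action on a groupoid --~that the maps \(G\times_{\source,\rho}K^{(j)}\to G\times_{\range,\rho}K^{(j)}\) are continuous for \(j=0,1\)~-- yields continuity on~\(\Contc\) in the inductive limit topology and hence the existence of an isomorphism \(\alpha\colon \source^*\Cst(K)\to \range^*\Cst(K)\) of \(\Cst\)-algebras over~\(G\), that is, a continuous \(G\)-action in the sense explained in Section~\ref{sec:actions_crossed}.

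Then I would build the unitary multiplier~\(u\). Each bisection \(\kappa_h\colon K_x^{(0)}\to K_x^{(1)}\) for \(h\in H_x\) defines a unitary multiplier of~\(\Cst(K_x)\) via convolution with the delta function supported on its graph: on \(\Contc(K_x)\) one sets \((u_h\cdot f)(k)\defeq f(\kappa_h(\range k)^{-1}\cdot k)\), which preserves convolution and is unitary because \(\kappa_h\) is a bisection (so left translation by~\(\kappa_h\) is a fibrewise homeomorphism of source fibres that permutes the Haar measures). The continuity of the map \(H\times_{\pi,\rho}K^{(0)}\to K^{(1)}\), \((h,y)\mapsto\kappa_h(y)\), gives strict continuity of \(h\mapsto u_h\cdot f\) for \(f\in \Contc(K)\), which is enough to promote~\(u\) to a unitary multiplier of the pull-back \(p^*\Cst(K)\) over~\(H\), as required.

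Finally I would verify the two compatibility conditions from Definition~\ref{def:action_discrete_groupoid-cm_Cstar} (applied fibrewise at each \(x\in X\)). For \(\alpha_{\tcm(h)}=\Ad(u_h)\): by the construction of~\(\tcm\) in \(\Aut_2(K)\) from Example~\ref{exa:crossed_module_groupoid}, the automorphism \(\alpha_{\tcm(h)}\) of~\(K_x\) is \(k\mapsto \kappa_h(\range k)\cdot k\cdot \kappa_h(\source k)^{-1}\), and direct inspection of the convolution formulas shows that this is exactly the formula for \(u_h\cdot f\cdot u_h^*\) on \(\Contc(K_x)\). For \(\alpha_g(u_h) = u_{\acm_g(h)}\): the action axiom \(\kappa_{\acm_g(h)}(y)=\alpha_g(\kappa_h(\alpha_g^{-1}(y)))\) translates under convolution to the identity \(\alpha_g^*(u_h)=u_{\acm_g(h)}\) in \(\U\Mult(\Cst(K_{\range(g)}))\). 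The hardest part, and the only genuinely non-routine step, is the continuity bookkeeping: one has to check that the pointwise constructions on fibres actually assemble into a morphism of \(\Cst\)-algebras over~\(G\) and a unitary multiplier of a pull-back over~\(H\), which reduces to controlling convolution integrals uniformly as one varies the bundle parameter, using the continuity of the Haar system and the continuity of \(\alpha_g\) and~\(\kappa_h\) in all variables.
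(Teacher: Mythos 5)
Your proposal follows essentially the same route as the paper's proof: both work on the dense subalgebra \(\Contc(K)\), let \(G\) act through the given homeomorphisms of the pulled-back function spaces (extended to \(\Cst(K)\) by \(I\)\nb-norm boundedness/continuity), implement \(H\) by unitary multipliers given by convolution with the delta measure on the graph of each bisection, and assemble these fibrewise constructions into a morphism over \(G\) and a multiplier of the pullback over \(H\); you even spell out the two compatibility checks that the paper leaves implicit. The only slip is notational: in \((u_h\cdot f)(k)\) the bisection element to invert is the one whose \emph{range} is \(\range(k)\), namely \(\kappa_h\bigl((\range\circ\kappa_h)^{-1}(\range k)\bigr)\) rather than \(\kappa_h(\range k)\) (which need not be composable with \(k\)), exactly as in the paper's precise formula.
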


The \(\Cst\)\nb-algebra \(\Cst(K)\) is defined in~\cite{Renault:Representations}.  We will briefly recall the definition of crossed products in Section~\ref{sec:covariant_rep_crossed}.  The following proof already uses the standard notation without further explanation.

\begin{proof}
  Let \(\cm=(G,H,\tcm,\acm)\) and let~\(X\) be the common object space of \(G\) and~\(H\).  We define an action of~\(\cm\) on the dense \Star{}subalgebra \(\Contc(K)\) of compactly supported continuous functions on the space of arrows on~\(K\).  This is a \(\Cont_0(X)\)-algebra via the anchor map \(\rho\colon K\to X\) that is part of the action of~\(\cm\).  Pulling it back to~\(G\) via the source and range maps, we get the spaces of compactly supported functions on \(G\times_{\source,\rho} K\) and \(G\times_{\range,\rho} K\).  The action of~\(G\) on~\(K\) provides an homeomorphism between these spaces, which defines an action of the groupoid~\(G\) on~\(\Contc(K)\) by \Star{}algebra automorphisms.  Being manifestly bounded for the \(I\)\nb-norm, this action extends to one on the \(\Cst\)\nb-completion~\(\Cst(K)\).  Any bisection \(h\colon K_x^{(0)}\to K_x^{(1)}\) defines a unitary multiplier~\(u_h\) of~\(\Contc(K_x)\) by \(u_h f(k) = f(h^{-1}(\cdots)\cdot k)\) and \(fu_h(k) = f\bigl(k \cdot h^{-1}(\cdots)\bigr)\) or, more precisely, \(u_h\cdot f(k) \defeq f\bigl(h\bigl((\range\circ h)^{-1}\circ \range (k)\bigr)^{-1}\cdot k\bigr)\) and \(f\cdot u_h(k) \defeq f\bigl(k\cdot h(\source k)^{-1}\bigr)\).  Letting~\(x\) vary, we get a unitary multiplier of the pullback of \(\Contc(K)\) from~\(X\) to~\(H\).  This remains a multiplier of the \(\Cst\)\nb-completions, and completes the action of~\(\cm\) on~\(\Cst(K)\).
\end{proof}

\begin{example}
  \label{exa:action_nc_torus_from_groupoid}
  Let \(\cm=(G,H,\tcm,\acm)\) be a crossed module of locally compact groupoids with common object space~\(X\).  Thinking of~\(\cm\) as representing a non-Hausdorff groupoid, the transformation groupoid \(H\ltimes_\partial G\) describes the non-Hausdorff space of arrows of~\(\cm\).  We expect, then, that~\(\cm\) acts on \(H\ltimes_\partial G\) by left translation.  And this should be the prototype of a transitive free and proper action.  This action of~\(\cm\) on \(H\ltimes_\partial G\) is defined as follows: let \(g\in G\) act on~\(G^{\range(g)}\) by left translation by~\(g\) and on \(H_{\source(g)}\times G^{\range(g)}\) by \(g\cdot (h,g') \defeq (\acm_g(h),gg')\); finally, map \(h\in H_x\) to the constant bisection \(G^x\ni g\mapsto (h,g) \in H_x\times G^x\).

  The induced action of~\(\cm\) on the groupoid \(\Cst\)\nb-algebra \(\Cst(H\ltimes_\partial G) \cong H\ltimes \Cont_0(G)\) is also a special case of the construction in Example~\ref{exa:action_on_crossed}: apply the latter to the action of~\(G\) on \(\Cont_0(G)\) by left translations.

  In particular, the construction above generalises the canonical action of the crossed module \((\Torus,\Z,\lambda)\) from Example~\ref{exa:crossed_module_Torus_Z} on the rotation algebra~\(\Rotc\theta\).
\end{example}

\section{Covariant representations and crossed products}
\label{sec:covariant_rep_crossed}

We first define covariant representations of crossed modules of topological groupoids.  For crossed modules of locally compact groups, we define the crossed product by a universal property with respect to such covariant representations.  Such a definition is inconvenient in the groupoid setting because the desintegration theory for groupoid representations is rather technical.  Instead, we reduce the construction of crossed products for crossed modules of groupoids to the already known crossed product for ordinary groupoids.  For crossed modules of groups, this construction produces a \(\Cst\)\nb-algebra with the correct universal property.

The following definition generalises one by Philip Green in~\cite{Green:Local_twisted}.

\begin{definition}
  \label{def:covariant_rep}
  Let \((G,H,\tcm,\acm)\) be a crossed module of topological groups, let \(A\) and~\(D\) be \(\Cst\)\nb-algebras, and let \((\alpha,u)\) be an action of~\((G, H, \tcm,\acm)\) on~\(A\).  A \emph{covariant representation} of \((G,H,\tcm,A,\alpha,u)\) in \(\Mult(D)\) is a pair \((\pi,V)\) consisting of a non-degenerate \Star{}homomorphism \(\pi\colon A\to \Mult(D)\) and a strictly continuous group homomorphism \(V\colon G\to\U\Mult(D)\) that satisfy
  \begin{alignat*}{2}
    V_g\pi(a)V_g^* &= \pi\bigl(\alpha_g(a)\bigr)
    &\qquad&\text{for all \(g\in G\), \(a\in A\);}\\
    V_{\tcm(h)} &= \pi(u_h) &\qquad&\text{for all \(h\in H\).}
  \end{alignat*}
\end{definition}

\begin{definition}
  \label{def:crossed_product}
  A covariant representation \((i_A,i_G)\) of \((G,H,\tcm,A,\alpha,u)\) in \(\Mult(D)\) is called \emph{universal} if any covariant representation of the same data on a \(\Cst\)\nb-algebra~\(E\) is of the form \((f\circ i_A,f\circ i_G)\) for a unique strictly continuous, unital \Star{}homomorphism \(f\colon \Mult(D)\to\Mult(E)\); equivalently, \(f\) is the strictly continuous extension of a non-degenerate \Star{}homomorphism \(D\to\Mult(E)\).  If \((i_A,i_G)\) is universal, we call~\(D\) the \emph{crossed product} of \((G,H,\tcm,A,\alpha,u)\) and denote it by \(A\rtimes_{(\alpha,u)}(G,H,\tcm,\acm)\) or, more briefly, by \(A\rtimes(G,H)\).
\end{definition}

The crossed product is unique up to \Star{}isomorphism.  More precisely, if two covariant representations \((i_A,i_G)\) and \((i'_A,i'_G)\) in \(\Mult(D)\) and~\(\Mult(D')\) are universal, then there is a unique strictly continuous \Star{}isomorphism \(\Mult(D)\cong\Mult(D')\) that intertwines \((i_A,i_G)\) and \((i'_A,i'_G)\).  Such an isomorphism restricts to an isomorphism \(D\cong D'\).

While Definition~\ref{def:crossed_product} makes sense for any topological group~\(G\), the existence and the following construction of a universal object require~\(G\) to be locally compact.

Following~\cite{Green:Local_twisted}, our starting point is the crossed product algebra \(A\rtimes_\alpha G\), which satisfies a similar universal property for covariant representations of \((G,A,\alpha)\), with respect to a canonical \Star{}representation \(i_A\colon A\to \Mult(A\rtimes_\alpha G)\) and a canonical unitary representation \(i_G\colon G\to\Mult(A\rtimes_\alpha G)\).  We will construct the crossed product as a quotient of \(A\rtimes_\alpha G\).  This construction still works for groupoids, where the universal property in terms of covariant representations is more complicated because of the difficult measure theory involved in the integration and desintegration of groupoid representations (see~\cite{Renault:Representations}).

Therefore, we now consider the more general setting of a crossed module of topological groupoids \(\cm = (G,H,\tcm,\acm)\) (see Definition~\ref{def:crossed_module_groupoid}).  We assume that \(H\) and~\(G\) are locally compact groupoids with Haar systems, and we denote the common object space of \(G\) and~\(H\) by~\(X\).  For simplicity, we also assume \(G\) and~\(H\) to be Hausdorff, although this assumption is probably unnecessary: with some effort it should be possible to generalise the same construction to locally Hausdorff groupoids.

Let \((A,\alpha,u)\) be a continuous action of~\(\cm\).  This means that~\(A\) is a \(\Cst\)\nb-algebra over~\(X\) (a \(\Cont_0(X)\)-\(\Cst\)-algebra), \(\alpha\) is a continuous action of~\(G\) on~\(A\), and~\(u\) is a continuous homomorphism between the group bundles~\(H\) and \(\U\Mult(A_x)_{x\in X}\) over~\(X\).  We want to construct a crossed product \(\Cst\)\nb-algebra \(A \rtimes_{\alpha,u} (G,H)\), using ordinary crossed products for groupoid actions.

First we define a crossed product groupoid \(H\rtimes_\acm G\) with the same object space~\(X\) as \(G\) and~\(H\).  Its space of arrows is the fibred product \(H\times_X G\), the multiplication is defined by \((h_1,g_1)\cdot (h_2,g_2) \defeq (h_1\acm_{g_1}(h_2),g_1g_2)\) for all composable pairs.  This defines a locally compact groupoid, which has a Haar system: take the product of the Haar systems on \(H\) and~\(G\).  The first condition for a crossed module of groupoids asserts that the map \(H\rtimes_\acm G \to G\), \((h,g)\mapsto \tcm(h)g\) is a continuous groupoid homomorphism.  Hence the action~\(\alpha\) of~\(G\) yields a continuous action~\(\bar\alpha\) of \(H\rtimes_\acm G\) by \(\bar\alpha_{(h,g)} \defeq \alpha_{\tcm(h)g}\).

Since both \(G\) and~\(H\rtimes_\acm G\) are locally compact groupoids with Haar systems, the definitions in~\cite{Renault:Representations} provide crossed product \(\Cst\)\nb-algebras \(A\rtimes_\alpha G\) and \(A \rtimes_{\bar\alpha} (H\rtimes_\acm G)\), whose definitions we now recall.  The crossed product \(\Cst\)\nb-algebra \(A\rtimes_\alpha G\) is defined by completing a dense \Star{}subalgebra \(\Cont_c(G,A)\) with respect to a suitable maximal \(\Cst\)\nb-norm. Here \(\Cont_c(G,A)\) denotes the space of continuous, compactly supported sections of the pull-back bundle \(\range^*(A)\), which becomes a \Star{}algebra by
\begin{equation}
  \label{eq:crossed-product}
  (f_1*f_2)(g) \defeq
  \int_{G^{\range(g)}} f_1(h) \alpha_h\bigl(f_2(h^{-1}g)\bigr) \,\dd\lambda^{\range(g)}(h),\quad
  f^*(g) \defeq \alpha_g\bigl(f(g\inv)^*\bigr).
\end{equation}
Here \((\lambda^x)_{x\in X}\) denotes the left invariant Haar measure on~\(G\).  The \(I\)\nb-norm on \(\Cont_c(G,A)\) is defined by
\[
\|f\|_I\defeq \max \left\{ \sup_{x\in X} \int_{G^x} \norm{f(g)} \,\dd{\lambda^x}(g),\quad \sup_{x\in X} \int_{G^x} \norm{f^*(g)} \,\dd{\lambda^x}(g) \right\}.
\]
A \Star{}representation of \(\Cont_c(G,A)\) on Hilbert space is called \emph{bounded} if it is contractive with respect to this norm.  Each \Star{}representation of \(\Cont_c(G,A)\) induces a \(\Cst\)\nb-seminorm on \(\Cont_c(G,A)\).  The supremum of these norms for all contractive representations is a well-defined \(\Cst\)\nb-norm on \(\Cont_c(G,A)\), and \(A\rtimes_\alpha G\) is the resulting \(\Cst\)\nb-completion.

Now we temporarily specialise once again to the group case to motivate the crucial ingredients of our construction.  We define two unitary representations of~\(H\) in \(\Mult(A\rtimes_\alpha G)\) by \(\rho_h\defeq i_G\bigl(\tcm(h)\bigr)\) and \(\sigma_h\defeq i_A(u_h)\) for \(h\in H\).  These representations are relevant because a covariant representation of \((G,H,\tcm,A,\alpha,u)\) is nothing but a covariant representation \((\pi,V)\) of \((G,A,\alpha)\) that satisfies the extra condition \(\pi\circ u = V\circ\tcm\).  Equivalently, the strictly continuous extension of the unique \Star{}representation \((\pi,V)_* \colon A\rtimes_\alpha G \to \Mult(D)\) attached to \((\pi,V)\) satisfies \((\pi,V)_*(\rho_h) = (\pi,V)_*(\sigma_h)\) for all \(h\in H\).  This already implies that the universal \(\Cst\)\nb-algebra \(A\rtimes_{(\alpha,u)}(G,H,\tcm,\acm)\) must be a quotient of \(A\rtimes_\alpha G\) by a certain ideal.

The commutation relations in \(A\rtimes_\alpha G\) imply that both pairs \((\sigma, i_G)\) and \((\rho, i_G)\) define unitary representations of the crossed product group \(H\rtimes_\acm G\) in \(\Mult(A\rtimes_\alpha G)\), and the relations \(\rho_h i_A(a) = i_A\bigl(\alpha_{\tcm(h)}(a)\bigr) \rho_h\) and \(\sigma_h i_A(a) = i_A\bigl(\alpha_{\tcm(h)}(a)\bigr) \sigma_h\) for all \(h\in H\) and \(a\in A\) imply that the triples \((i_A,\rho,i_G)\) and \((i_A,\sigma,i_G)\) are covariant representations of \((A,H\rtimes_\acm G,\bar\alpha)\) in \(\Mult(A\rtimes_\alpha G)\).  These integrate to \Star{}homomorphisms
\[
\rho_*,\sigma_*\colon \Cont_c(H\rtimes_\acm G,A) \rightrightarrows \Cont_c(G,A)
\]
(see~\eqref{eq:rho_sigma_integrated}) and hence map \(A\rtimes_{\bar\alpha}(H\rtimes_\acm G)\) into \(A\rtimes_\alpha G\).  This yields \Star{}homomorphisms
\begin{equation}
  \label{eq:coequaliser_ingredients}
  \rho_*,\sigma_*\colon
  A\rtimes_{\bar\alpha} (H\rtimes_\acm G) \rightrightarrows
  A\rtimes_\alpha G.
\end{equation}

If \(f\colon A\rtimes_\alpha G\to \Mult(D)\) is the integrated form of a covariant representation \((\pi,V)\) of \((G,H,\tcm,A,\alpha,u)\), then, clearly, \(f\circ\rho_* = f\circ\sigma_*\).  The converse also holds by the universal property of the crossed product \(A\rtimes_{\bar\alpha} (H\rtimes_\acm G)\).  Thus the crossed product for \((G,H,\tcm,A,\alpha,u)\) is the coequaliser of the two maps in~\eqref{eq:coequaliser_ingredients}, that is, the quotient of \(A\rtimes_\alpha G\) by the closed two-sided ideal~\(I_H\) generated by the range of the linear map \(\rho_*-\sigma_*\colon A\rtimes_{\bar\alpha} (H\rtimes_\acm G) \to A\rtimes_\alpha G\).

Now we return to the general case of a crossed module of groupoids.  We define \Star{}homomorphisms \(\rho_*\) and~\(\sigma_*\) from \(\Cont_c(H\rtimes_\acm G,A)\) to \(\Cont_c(G,A)\) by the formulas
\begin{equation}
  \label{eq:rho_sigma_integrated}
  \begin{aligned}
    (\rho_* f)(g) &\defeq
    \int_{H_x} f(g \tcm(h)^{-1},h)\,\dd\lambda_{H_x}(h),\\
    (\sigma_* f)(g) &\defeq \int_{H_x} f(g,h)\cdot u_h \,\dd\lambda_{H_x}(h).
  \end{aligned}
\end{equation}
In the group case, these are the maps we get from the constructions above.  In general, \eqref{eq:rho_sigma_integrated} defines contractive \Star{}homomorphisms from \(\Cont_c(H\rtimes_\acm G,A)\) to \(\Cont_c(G,A)\).  These extend to \Star{}homomorphisms as in~\eqref{eq:coequaliser_ingredients}.

\begin{definition}
  \label{def:crossed_product_groupoid}
  We let \(A\rtimes_{\alpha,u} (G,H)\) be the coequaliser of the pair of maps in~\eqref{eq:coequaliser_ingredients}, that is, the quotient of \(A\rtimes_\alpha G\) by the closed ideal generated by the range of \(\rho_*-\sigma_*\).
\end{definition}

By construction, this definition extends the crossed product for ordinary locally compact groupoids, that is, it agrees with \(A\rtimes_\alpha G\) if \(H=\{1\}\).

The crossed product comes equipped with canonical \Star{}representations
\[
i_A\colon A\to \Mult\bigl(A\rtimes_{\alpha,u} (G,H) \bigr),\qquad i_G\colon \Cst(G) \defeq \Cont_0(X) \rtimes G \to \Mult\bigl(A\rtimes_{\alpha,u} (G,H) \bigr).
\]
The map~\(i_G\) comes from the map \(\Cont_0(X) \to \Mult(A)\), which induces a \Star{}homomorphism \(\Cst(G) \defeq \Cont_0(X)\rtimes G \subseteq \Mult(A)\rtimes G\subseteq \Mult(A\rtimes G)\).  In the group case, \(i_G\) is equivalent to a unitary representation of~\(G\), and \(A\rtimes_{\alpha,u} (G,H)\) with the maps \(i_A\) and~\(i_G\) satisfies the universal property required of a crossed product.

Actually, the range of \(\rho_*-\sigma_*\) is itself a two-sided ideal because both \(\rho_*\) and~\(\sigma_*\) are bimodule homomorphisms with respect to the obvious \(A\rtimes_\alpha G\)-bimodule structures on \(A\rtimes_{\bar\alpha} (H\rtimes_\acm G)\) and~\(A\rtimes_\alpha G\) (use the embedding \(A\rtimes_\alpha G\to \Mult(A\rtimes_{\bar\alpha} (H\rtimes_\acm G)\)).  Thus~\(I_H\) is the closure of the range of the map \(\rho_*-\sigma_*\).

\begin{example}
  \label{exa:crossed_product_Green}
  Let~\(G\) be a locally compact group and let~\(H\) be a closed normal subgroup in~\(G\).  Then \(A\rtimes_{\alpha,u}(G,H)\) coincides with the twisted covariance algebra in~\cite{Green:Local_twisted}.  Both satisfy the same universal property, and the explicit constructions are also equivalent (we have expanded the construction of the ideal~\(I_H\) to prepare for the generalisation of the theory to groupoids).
\end{example}

\begin{definition}
  \label{def:crossed_module_Cstar}
  Let \(A\defeq \Cont_0(X)\) with~\(G\) acting by the usual action~\(\alpha\) on its object space and \(u_h=1\) for all \(h\in H\).  This defines an action \((\alpha,1)\) of \((G,H,\tcm,\acm)\) on~\(A\).  The resulting crossed product \(\Cst\)\nb-algebra \(\Cst(G,H,\tcm,\acm)\defeq \Cont_0(X)\rtimes_{\alpha,1} (G,H)\) is the \emph{crossed module \(\Cst\)\nb-algebra} of \((G,H,\tcm,\acm)\).
\end{definition}

If \(H=\{1\}\), this definition agrees with the groupoid \(\Cst\)\nb-algebra of~\(G\).

\begin{example}
  \label{exa:crossed_module_Cstar}
  Suppose \(u_h=1\) for all \(h\in H\).  (This is impossible unless \(\alpha_{\tcm(h)}=\Id\) for all \(h\in H\).)  The covariance condition \(V_{\tcm(h)} = \pi(u_h)\) means that~\(V\) is trivial on \(\tcm(H)\).  If \(G\) and~\(H\) are groups, we may replace \(\tcm(H)\) by its closure~\(\cl{\tcm(H)}\) because \(\Aut(A)\) is Hausdorff.  Hence covariant representations of \((G,H,\tcm,A,\alpha,1)\) are the same as covariant representations of \((G/\cl{\tcm(H)},A,\alpha)\), so that
  \[
  A\rtimes_{(\alpha,1)}(G,H)\cong A\rtimes_\alpha \bigl( G/\cl{\tcm(H)} \bigr) .
  \]

  In particular, the crossed module \(\Cst\)\nb-algebra \(\C\rtimes_{\Id,1}(G,H)\) for a crossed module \((G,H)\) is \(\Cst\bigl(G/\cl{\tcm(H)}\bigr)\).  This is just~\(\C\) if~\(\tcm\) has dense range.  For instance, this shows that \(\Cst(\Torus,\Z) = \C\).  It is rather surprising that this answer should be so trivial.

  In the groupoid case, we cannot always pass to a Hausdorff quotient.  For each \(x\in X\), let~\(\bar{H}_x\) be the closure of~\(\tcm(H_x)\) in the stabiliser~\(G_x^x\) and let \(\bar{H}=\bigcup_{x\in X} \bar{H}_x\).  The argument above shows that~\(\bar{H}\) acts trivially in any covariant representation, so that our action descends to~\(G/\bar{H}\).  But~\(\bar{H}\) need not be closed in~\(G\).  For instance, in the situation of Example~\ref{exa:crossed_module_groupoid_isotropy}, \(\bar{H}=\tcm(H)\) because the fibres~\(G_x^x\) are all discrete, and this subset is always open in~\(G\) but usually not closed.  The \(\Cst\)\nb-algebra \(\Cst(G,H)\) for the crossed module in Example~\ref{exa:crossed_module_groupoid_isotropy} is isomorphic to the \(\Cst\)\nb-algebra of the non-Hausdorff groupoid \(G/H\) as described in~\cite{Connes:Survey_foliations}: the coequaliser of \(\rho_*,\sigma_*\colon \Contc(H\rtimes_\acm G) \rightrightarrows \Contc(G)\) is exactly the convolution algebra \(\Contc(G/H)\) used in~\cite{Connes:Survey_foliations}; the latter is spanned by continuous functions with compact support on Hausdorff open subsets of~\(G/H\).
\end{example}

\begin{example}
  \label{exa:crossed_product_B_group}
  Let~\(H\) be an abelian locally compact group.  Recall that actions of the crossed module \((\{1\},H)\) correspond to \(\Cst\)\nb-algebras over~\(\hat{H}\) (Example~\ref{exa:action_B_group}).  When we translate the crossed product with \((\{1\},H)\) to \(\Cst\)\nb-algebras over~\(\hat{H}\), it becomes the functor that takes the fibre at~\(1\) of a \(\Cst\)\nb-algebra over~\(\hat{H}\).  This is the quotient of~\(A\) by the relations \(u_h=1\) for all \(h\in H\).
\end{example}

Similarly, for the crossed module \((G,H,1,\acm)\) of an abelian group extension introduced in Example~\ref{exa:action_Ab_extension}, we may view actions of \((G,H,1,\acm)\) as \(G\)\nb-equivariant \(\Cst\)\nb-algebras over the space~\(\hat{H}\).  In this setting, the crossed product becomes the following construction: first restrict the bundle to the fibre of \(1\in\hat{H}\), which still carries an action of~\(G\), and then take the crossed product with~\(G\).

Recall that the correspondence between crossed module actions of \((G,H,1,\acm)\) and actions of the groupoid \(\hat{H}\rtimes G\) is an equivalence of categories.  Nevertheless, these two categories differ in important additional structure, namely, in the crossed product functors and the natural tensor products on both categories.

\begin{theorem}
  \label{the:split_crossed_product_I}
  Let \((G,H,\tcm,\acm)\) be a crossed module of locally compact topological groupoids and let~\(A\) be a \(\Cst\)\nb-algebra with a continuous action~\(\alpha\) of~\(G\).  Equip \(A\rtimes_{\alpha\circ \partial} \nobreak H\) with the canonical action of~\((G,H,\tcm,\acm)\) in Example~\textup{\ref{exa:action_on_crossed}}.  Then there is a natural isomorphism
  \[
  (A\rtimes_{\alpha\circ \partial} H)\rtimes(G,H)\cong A\rtimes_\alpha G.
  \]
\end{theorem}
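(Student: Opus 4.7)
Set $B\defeq A\rtimes_{\alpha\circ\partial}H$ and let $(\beta,i_H)$ be the canonical crossed-module action on $B$ from Example~\ref{exa:action_on_crossed}.  I would construct mutually inverse \Star{}homomorphisms $\Psi\colon A\rtimes_\alpha G\to B\rtimes(G,H)$ and $\Phi\colon B\rtimes(G,H)\to A\rtimes_\alpha G$ from the canonical generators on both sides.  Write $j_B,j_G$ for the canonical maps of $B$ and~$G$ into $\Mult\bigl(B\rtimes(G,H)\bigr)$ and $i_A,i_G$ for those of $A$ and~$G$ into $\Mult(A\rtimes_\alpha G)$.  For $\Psi$, observe that $(j_B\circ i_A,j_G)$ is a covariant representation of $(A,G,\alpha)$ in $\Mult\bigl(B\rtimes(G,H)\bigr)$: the ambient equivariance $j_G(g)j_B(b)j_G(g)^*=j_B(\beta_g(b))$ specialises on $b=i_A(a)$ to $j_G(g)j_B(i_A(a))j_G(g)^*=j_B(i_A(\alpha_g(a)))$.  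Integrating gives~$\Psi$.

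For $\Phi$, work inside $\Mult(A\rtimes_\alpha G)$ and set $\tilde\imath_H(h)\defeq i_G\bigl(\partial(h)\bigr)$.  Then $(i_A,\tilde\imath_H)$ is a covariant representation of $(A,H,\alpha\circ\partial)$ because $i_G(\partial(h))\, i_A(a)\, i_G(\partial(h))^*=i_A(\alpha_{\partial(h)}(a))$; integrating produces $\tilde\imath_B\colon B\to\Mult(A\rtimes_\alpha G)$.  The pair $(\tilde\imath_B,i_G)$ is then a covariant representation of the crossed-module action $(\beta,i_H)$ on~$B$: equivariance on $i_A(a)$-generators reduces to the $G$-covariance on~$A$; equivariance on $i_H(h)$-generators becomes $i_G(g\partial(h)g^{-1})=i_G(\partial(\acm_g(h)))$, which is exactly the first crossed-module relation; and the twist $\tilde\imath_B(i_H(h))=V_{\partial(h)}$ with $V=i_G$ holds by construction.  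In the group case, Definition~\ref{def:crossed_product} gives $\Phi$ immediately.  For crossed modules of groupoids, where the crossed product is the coequaliser of Definition~\ref{def:crossed_product_groupoid}, the pair $(\tilde\imath_B,i_G)$ is already covariant for the plain $G$-action $\beta$ on~$B$ and hence integrates to a \Star{}homomorphism $B\rtimes_\beta G\to A\rtimes_\alpha G$; the key observation is that this homomorphism annihilates the ideal~$I_H$, because under it the two representations $\rho_h=i_G(\partial(h))$ and $\sigma_h=\tilde\imath_B(i_H(h))=i_G(\partial(h))$ of~$H$ in $\Mult(A\rtimes_\alpha G)$ coincide, so the two maps~$\rho_*,\sigma_*$ of~\eqref{eq:coequaliser_ingredients} have equal composite with it.

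The compositions $\Phi\Psi$ and $\Psi\Phi$ are the identity by a direct check on the canonical generators $i_A(a)$, $i_H(h)$, $j_G(g)$; the only non-trivial case is $j_B(i_H(h))\mapsto j_G(\partial(h))$, which returns to $j_B(i_H(h))$ precisely by the crossed-module twist $j_B(i_H(h))=j_G(\partial(h))$ built into $B\rtimes(G,H)$.  Naturality in~$A$ is clear from the construction.  The main obstacle in the groupoid setting is the coequaliser identification $\rho_h=\sigma_h=i_G(\partial(h))$ under $(\tilde\imath_B,i_G)$, which stands in for the universal-property shortcut available in the group case; verifying it at the dense \Star{}subalgebra level reduces, via the formulas~\eqref{eq:rho_sigma_integrated}, to the identity $\tilde\imath_B(u_h)=i_G(\partial(h))$ and the left-invariance of the Haar system on~$G$.
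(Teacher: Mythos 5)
Your argument works in the group case, and there it is essentially the paper's proof in different clothing: your $\Phi$ is the map the paper calls $\dot\chi_*$ (obtained from the groupoid homomorphism $\chi\colon H\rtimes_\acm G\to G$, $(h,g)\mapsto\tcm(h)g$, after identifying $(A\rtimes H)\rtimes G\cong A\rtimes(H\rtimes_\acm G)$), and your $\Psi$ is the inverse that the universal property of Definition~\ref{def:crossed_product} provides. But the theorem is stated for crossed modules of locally compact \emph{groupoids}, and there your construction of $\Psi$ has a genuine gap. For groupoids there are no elementwise unitaries $j_G(g)$, $i_G(g)$, $u_h$ in the multiplier algebras: the canonical datum is a representation of $\Cst(G)=\Cont_0(X)\rtimes G$, the crossed product $B\rtimes(G,H)$ is \emph{defined} as the coequaliser of \eqref{eq:coequaliser_ingredients}, and no universal property for covariant representations is available -- the paper asserts one only in the group case, precisely because ``integrating'' a covariant pair such as your $(j_B\circ i_A,j_G)$ requires the disintegration theory it deliberately avoids. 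Consequently the construction of $\Psi$, and the verification of $\Phi\Psi=\Id$ and $\Psi\Phi=\Id$ ``on the generators $i_A(a)$, $i_H(h)$, $j_G(g)$'', have no meaning in the groupoid setting. Your closing sentence also locates the ``main obstacle'' in the wrong place: descending $\Phi$ through the coequaliser is the easy half (your reduction via \eqref{eq:rho_sigma_integrated} is fine and matches the paper's computation with the double crossed product $H\rtimes_{\acm\chi}(H\rtimes_\acm G)$); the hard half is to show that the resulting surjection onto $A\rtimes_\alpha G$ is injective.

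What is needed, and what the paper supplies instead of a universal property, is an argument at the level of dense subalgebras: every representation $f$ of $(A\rtimes H)\rtimes G\cong A\rtimes(H\rtimes_\acm G)$, restricted to $\Contc\bigl(G,\Contc(H,A)\bigr)$, which satisfies $f\circ\rho_*=f\circ\sigma_*$ for the maps $\rho_*,\sigma_*$ of \eqref{eq:coequaliser_ingredients} applied to $B=A\rtimes H$, already factors through $\chi_*\colon \Contc\bigl(G,\Contc(H,A)\bigr)\to\Contc(G,A)$; hence every representation of $(A\rtimes H)\rtimes(G,H)$ factors through $A\rtimes_\alpha G$, which gives injectivity of $\dot\chi_*$ without ever invoking covariant representations of the groupoid. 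To repair your proposal you must either supply this factorisation argument (or some other groupoid-level construction of $\Psi$), or restrict the statement to crossed modules of groups, where your proof is correct as written.
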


\begin{proof}
  The action of~\(G\) on \(A\rtimes H\) is defined so that \((A\rtimes H)\rtimes G \cong A\rtimes (H\rtimes_\acm G)\).  The groupoid homomorphism \(\chi\colon H\rtimes_\acm G \to G\), \((h,g)\mapsto \tcm(h)g\), induces a surjective \Star{}homomorphism \(\chi_*\colon A\rtimes (H\rtimes_\acm G) \to A\rtimes G\).  On dense subalgebras, it is given by
  \[
  \chi_*\colon \Contc\bigl(G,\Contc(H,A)\bigr) \to \Contc(G,A),\qquad \chi_* f(g) \defeq \int_{H_{\source(g)}} f(g\tcm(h)^{-1},h) \,\dd\lambda_{H_{\source(g)}}(h).
  \]
  We claim that~\(\chi_*\) descends to a \Star{}homomorphism \(\dot\chi_*\colon A\rtimes H\rtimes (G,H) \to A\rtimes G\), that is, \(\chi_*\circ\rho_*= \chi_*\circ\sigma_*\), where \(\rho_*,\sigma_*\colon A\rtimes H\rtimes (H\rtimes_\acm G) \rightrightarrows A\rtimes H\rtimes G\) are the maps defined in~\eqref{eq:coequaliser_ingredients}.

  Let \(H\rtimes_{\acm\circ\chi} (H\rtimes_\acm G)\) be the crossed product groupoid over~\(X\) for the action
  \[
  \acm\circ\chi\colon H\rtimes_\acm G\to G\to\Aut(H)
  \]
  of~\(H\rtimes_\acm G\) on~\(H\).  This is \(H\times_X H\times_X G\) with the product
  \[
  (h_1,h_2,g_2)\cdot (h_3,h_4,g_4) \defeq (h_1 \cdot \acm_{\tcm(h_2)g_2}(h_3),h_2\cdot \acm_{g_2}(h_4), g_2\cdot g_4).
  \]
  The map \(\chi_2\colon H\rtimes_{\acm\chi} H\rtimes_\acm G \to G\), \((h_1,h_2,g)\mapsto \tcm(h_1h_2)g\), is a groupoid homomorphism over the the base map \(\Id\colon X \to X\).  The obvious map identifies the iterated crossed product \(A\rtimes H\rtimes (H\rtimes_\acm G)\) with \(A\rtimes_{\alpha\chi_2} (H\rtimes_{\acm\chi} H\rtimes_\acm G)\).  A routine computation shows that the two maps
  \[
  \chi_*\rho_*,\chi_*\sigma_* \colon A\rtimes_{\alpha \chi_2} (H\rtimes_{\acm\chi} H \rtimes_\acm G) \rightrightarrows A\rtimes_\alpha G
  \]
  are both induced by the groupoid homomorphism~\(\chi_2\).  Hence~\(\dot\chi_*\) exists.

  The map \(\dot\chi_*\colon (A\rtimes H)\rtimes(G,H)\to A\rtimes G\) is surjective because~\(\chi_*\) is surjective.  To establish injectivity, we must show that any representation of \(A\rtimes H\rtimes (G,H)\) factors through \(A\rtimes G\).  This is true already on the level of the dense subalgebras \(\Contc\bigl(G,\Contc(H,A)\bigr) = \Contc(G\ltimes_\acm H,A)\) in \(A\rtimes H\rtimes G\) and \(\Contc(G,A)\) in \(A\rtimes G\).  A representation of \(\Contc\bigl(G,\Contc(H,A)\bigr)\) with \(f\circ\rho_*=f\circ\sigma_*\) for \(\rho_*\) and~\(\sigma_*\) as in~\eqref{eq:rho_sigma_integrated} must factor through~\(\chi_*\).  Hence any representation of \(A\rtimes H\rtimes (G,H)\) factors through~\(\dot\chi_*\).
\end{proof}

\begin{example}
  \label{exa:crossed_group_on_itself}
  Let \((G,H,\tcm,\acm)\) be a crossed module of locally compact groups.  Let \(A=\Cont_0(G)\rtimes H\) with the canonical covariant representation of \((G,H,\tcm,\acm)\).  We may interpret~\(A\) as the \(\Cst\)\nb-algebra of functions on the underlying non-commutative space of~\((G,H,\tcm,\acm)\).  Then \(A\rtimes(G,H) \cong \Cont_0(G)\rtimes G \cong \Comp(L^2G)\).  In particular, this implies
  \[
  \Rotc\vartheta\rtimes(\Torus,\Z) \cong \Comp(L^2\Torus).
  \]
  for the action of \((\Torus,\Z)\) on the rotation \(\Cst\)\nb-algebra~\(\Rotc\vartheta\) in Example~\ref{exa:action_Torus_Z}.
\end{example}

\begin{corollary}
  \label{cor:free_covariant_rep_unique}
  Let \((G,H,\tcm,\acm)\) be a crossed module of locally compact groups.  Equip \(A\defeq \Cont_0(G)\rtimes H\) with the canonical action of \((G,H,\tcm,\acm)\).  Any covariant representation of \((G,H,\tcm,A,\alpha,u)\) is a direct sum of multiples of the standard covariant representation on \(L^2G\), which involves the actions of \(\Cont_0(G)\) by pointwise multiplication and the actions of \(H\) and~\(G\) by left translations.
\end{corollary}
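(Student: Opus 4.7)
The plan is to deduce the corollary directly from Theorem~\ref{the:split_crossed_product_I} together with the universal property of the crossed product and the classification of representations of the compact operators.

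First, by Definition~\ref{def:crossed_product}, covariant representations of \((G,H,\tcm,A,\alpha,u)\) on a Hilbert space~\(\mathcal H\) correspond bijectively to non-degenerate \Star{}representations of the crossed product \(A\rtimes_{(\alpha,u)}(G,H)\) on~\(\mathcal H\). Second, Theorem~\ref{the:split_crossed_product_I} applied to the action of~\(G\) on \(\Cont_0(G)\) by left translations gives a natural isomorphism
\[
A\rtimes_{(\alpha,u)}(G,H) = \bigl(\Cont_0(G)\rtimes H\bigr)\rtimes(G,H) \cong \Cont_0(G)\rtimes G,
\]
as already noted in Example~\ref{exa:crossed_group_on_itself}. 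Since the translation action of~\(G\) on itself is free and proper, \(\Cont_0(G)\rtimes G\cong \Comp(L^2 G)\) via the standard covariant representation \((M,\Lambda)\) where \(M\colon \Cont_0(G)\to\mathcal B(L^2G)\) is pointwise multiplication and \(\Lambda\colon G\to\U(L^2G)\) is left translation.

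Next I would use the classical fact that any non-degenerate \Star{}representation of \(\Comp(L^2G)\) on a Hilbert space~\(\mathcal H\) is unitarily equivalent to a multiple of its defining representation, i.e.\ \(\mathcal H\cong L^2G\otimes\mathcal K\) for some auxiliary Hilbert space~\(\mathcal K\), and the representation is \(T\mapsto T\otimes 1_{\mathcal K}\). Composing with the chain of isomorphisms above, every covariant representation of \((G,H,\tcm,A,\alpha,u)\) is unitarily equivalent to a multiple of the covariant representation that one reads off on \(L^2 G\) from the identifications; the only remaining task is to check what this standard representation is.

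The main point to verify, and the only substantive step beyond quoting previous results, is that tracking the chain \(A\rtimes(G,H)\xrightarrow{\dot\chi_*}\Cont_0(G)\rtimes G\cong\Comp(L^2G)\) identifies the generators as claimed: namely, \(i_A\) restricted to \(\Cont_0(G)\subseteq\Mult(A)\) becomes pointwise multiplication~\(M\) on~\(L^2G\); the unitaries \(i_A\circ u\colon H\to\U\Mult(A)\) become \(\Lambda\circ\tcm\); and \(i_G\colon G\to\U\Mult(A\rtimes(G,H))\) becomes the left-regular representation~\(\Lambda\). This is a direct computation on the dense subalgebra \(\Contc(G,\Contc(H,\Cont_0(G)))\) using the formula for~\(\chi_*\) in the proof of Theorem~\ref{the:split_crossed_product_I} and the definition of the canonical action in Example~\ref{exa:action_on_crossed}; once done, the asserted description of the standard covariant representation, and hence of arbitrary covariant representations as direct sums of multiples of it, follows.
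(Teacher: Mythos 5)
Your proposal is correct and follows essentially the same route as the paper: the universal property of the crossed product, the isomorphism \((\Cont_0(G)\rtimes H)\rtimes(G,H)\cong\Cont_0(G)\rtimes G\cong\Comp(L^2G)\) from Theorem~\ref{the:split_crossed_product_I}, and the fact that every non-degenerate representation of the compact operators is a multiple of the standard one. The paper's proof is in fact terser, not even spelling out the generator-tracking step you correctly flag as the only point needing verification.
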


\begin{proof}
  By the universal property of \(A\rtimes_{\alpha,u} (G,H)\), covariant representations of \((G,H,\tcm,A,\alpha,u)\) correspond to representations of \(A\rtimes_{\alpha,u} (G,H)\).  Theorem~\ref{the:split_crossed_product_I} identifies this crossed product \(\Cst\)\nb-algebra with \(\Cont_0(G)\rtimes G\), which is well-known to be isomorphic to \(\Comp(L^2G)\) because the transformation groupoid \(G\rtimes G\) is isomorphic to the pair groupoid on~\(G\).  Any \Star{}representation of \(\Comp(L^2G)\) is a direct sum of copies of the standard representation on~\(L^2G\).
\end{proof}

\section{Functoriality and exactness of crossed products}
\label{sec:functor_exact}

Throughout this section, we let \(\cm= (G,H,\tcm,\acm)\) be a crossed module of locally compact groupoids with Haar systems, so that crossed products with~\(\cm\) are defined.  Let~\(X\) be the common object space of \(G\) and~\(H\).

\begin{definition}
  \label{def:equivariant_map}
  Let \((\alpha,u)\) and \((\beta,v)\) be actions of a crossed module \((G,H,\tcm,\acm)\) on \(\Cst\)\nb-algebras \(A\) and~\(B\), respectively.  We call a \Star{}homomorphism \(\pi\colon A\to \Mult(B)\) \emph{\((G,H)\)-equivariant} if \(\pi\bigl(\alpha_g(a)\bigr)=\beta_g\bigl(\pi(a)\bigr)\) and \(\pi(u_ha)=v_h\pi(a)\) for all \(a\in A_x\), \(g\in G\) with \(\source(g)=x\), and all \(h\in H_x\).
\end{definition}

The condition \(\pi(u_ha)=v_h\pi(a)\) generalises \(\pi(u_h)=v_h\) to possibly degenerate \Star{}homomorphisms; the latter condition involves the strictly continuous extension of~\(\pi\), which only exists in the non-degenerate case.

The crossed product \(A\rtimes (G,H)\) described above is clearly functorial for \((G,H)\)-equivariant maps: a \((G,H)\)\nb-equivariant map \(f\colon A\to B\) induces commuting diagrams of \Star{}homomorphisms
\[
\xymatrix{ A\rtimes_{\bar\alpha} (H\rtimes_\acm G) \ar[d]^{f_*} \ar[r]^-{\rho_*}&
  A\rtimes_\alpha G \ar[d]^{f_*}\\
  B\rtimes_{\bar\beta} (H\rtimes_\acm G) \ar[r]^-{\rho_*}& B\rtimes_\beta G }\qquad\xymatrix{ A\rtimes_{\bar\alpha} (H\rtimes_\acm G) \ar[d]^{f_*} \ar[r]^-{\sigma_*}&
  A\rtimes_\alpha G \ar[d]^{f_*}\\
  B\rtimes_{\bar\beta} (H\rtimes_\acm G) \ar[r]^-{\sigma_*}& B\rtimes_\beta G.  }
\]
This induces a map \(f\rtimes(G,H)\colon A\rtimes (G,H) \to B\rtimes (G,H)\) between the coequalisers.  Similarly, a non-degenerate \((G,H)\)-equivariant \Star{}homomorphism \(A\to \Mult(B)\) induces a non-degenerate \Star{}homomorphism \(A\rtimes (G,H)\to \Mult\bigl(B\rtimes (G,H)\bigr)\).

Next we claim that the crossed product functor preserves Morita equivalence, provided we use the appropriate notion of equivariant Morita equivalence for actions of crossed modules.  The most systematic way to arrive at this notion uses the linking algebra picture of Morita equivalence.  Recall that two \(\Cst\)\nb-algebras \(A\) and~\(B\) are Morita equivalent if and only if there are another \(\Cst\)\nb-algebra~\(D\) and a projection \(p\in\Mult(D)\) such that both \(p\) and~\(p^\bot\defeq 1-p\) are full (\(DpD\) and \(Dp^\bot D\) span a dense subspace of~\(D\)) and \(A\cong pDp\), \(B\cong p^\bot D p^\bot\).

Given an imprimitivity \(A,B\)\nb-bimodule~\(\E\), we may let
\[
D\defeq
\begin{pmatrix}
  A&\E\\\E^*&B
\end{pmatrix},\qquad p\defeq
\begin{pmatrix}
  1&0\\0&0
\end{pmatrix}.
\]
The multiplication in~\(D\) is matrix multiplication together with the maps \(\E\times\E^*\to\nobreak A\), and so on, which are part of the imprimitivity bimodule structure.  Conversely, given \(D\) and~\(p\), we let \(\E\defeq pDp^\bot\) with the canonical Hilbert \(A,B\)-bimodule structure.

\begin{definition}
  \label{def:Morita_equivalence_equiv}
  Two \(\Cst\)\nb-algebra actions \((A,\alpha,u)\) and \((B,\beta,v)\) of~\(\cm\) are called (equivariantly) \emph{Morita equivalent} if there is another \(\Cst\)\nb-algebra action \((D,\delta,w)\) of~\(\cm\) and a \(\delta\)\nb-invariant projection \(p\in\Mult(D)\) such that
  \begin{itemize}
  \item both \(p\) and~\(p^\bot\defeq 1-p\) are full,
  \item there are \((G,H)\)\nb-equivariant \Star{}isomorphisms \(pDp\cong A\) and \(p^\bot Dp^\bot\cong B\).
  \end{itemize}
  Since \(p\) is \(\delta\)\nb-invariant, the action of~\(G\) on~\(D\) restricts to one on \(pDp\subseteq D\) and \(w_hp_x w_h^* = \delta_{\tcm(h)}(p_x) = p_x\) for all \(h\in H_x\), that is, \(w\) commutes with~\(p\).  Therefore, the unitary multipliers~\(w_h\) restrict to unitary multipliers \(pw_hp\) and \(p^\bot w_h p^\bot\) of the appropriate fibres of \(pDp\) and \(p^\bot Dp^\bot\), respectively.  These together with the restrictions of~\(\delta\) form actions of \((G,H)\) on the corners \(pDp\) and \(p^\bot Dp^\bot\), which are used above.
\end{definition}

\begin{proposition}
  \label{pro:crossed_Morita}
  Let \((A,\alpha,u)\) and \((B,\beta,v)\) be equivariantly Morita equivalent \(\Cst\)\nb-algebra actions of~\(\cm\).  Then the crossed products \(A\rtimes_{\alpha,u} \cm\) and \(B\rtimes_{\beta,v} \cm\) are Morita equivalent as well.
\end{proposition}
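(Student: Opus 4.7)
The plan is to use the linking-algebra picture of Morita equivalence underlying Definition~\ref{def:Morita_equivalence_equiv}. Given the data $(D,\delta,w,p)$ implementing the equivalence, I would form the crossed product $\widetilde{D} \defeq D \rtimes_{\delta,w} \cm$ and let $\tilde p \in \Mult(\widetilde{D})$ denote the image of $p$ under the canonical embedding $\Mult(D) \subseteq \Mult(D \rtimes_\delta G) \to \Mult(\widetilde{D})$. The goal is to show that $\tilde p$ and $\tilde p^\bot = 1 - \tilde p$ are full projections in $\widetilde{D}$, with corners $\tilde p \widetilde{D} \tilde p \cong A \rtimes_{\alpha,u}\cm$ and $\tilde p^\bot \widetilde{D} \tilde p^\bot \cong B \rtimes_{\beta,v}\cm$. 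This exhibits $\widetilde{D}$ as a linking algebra for the two crossed products and so yields the desired Morita equivalence.

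The central step is the corner identification. At the level of dense $\Contc$\nb-subalgebras, a direct calculation from~\eqref{eq:crossed-product} using the $G$\nb-invariance of~$p$ shows that $\tilde p (D \rtimes_\delta G) \tilde p \cong (pDp) \rtimes_\delta G$, and similarly $\tilde p\bigl( D \rtimes_{\bar\delta}(H \rtimes_\acm G)\bigr) \tilde p \cong (pDp)\rtimes_{\bar\delta}(H\rtimes_\acm G)$, with analogous identifications for~$\tilde p^\bot$. Since $\rho_*$ and~$\sigma_*$ in~\eqref{eq:coequaliser_ingredients} are $D \rtimes_\delta G$-bimodule homomorphisms (as noted after Definition~\ref{def:crossed_product_groupoid}) and~$\tilde p$ sits in $\Mult(D \rtimes_\delta G)$, compression by~$\tilde p$ commutes with the pair $(\rho_*,\sigma_*)$. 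Therefore, cutting the coequaliser diagram defining $\widetilde{D}$ on both sides by~$\tilde p$ produces exactly the coequaliser diagram defining $A \rtimes_{\alpha,u}\cm$, and passing to coequalisers yields the claimed isomorphism of corners; the argument for~$B$ is identical.

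Fullness of~$\tilde p$ reduces, via the canonical surjection $D \rtimes_\delta G \onto \widetilde{D}$, to fullness of the image of~$p$ in $D \rtimes_\delta G$. There, the closed two-sided ideal generated by~$\tilde p$ contains $i_D(DpD)$, which is dense in $i_D(D)$ because~$p$ is full in~$D$; since $D \rtimes_\delta G$ is generated as a \(\Cst\)\nb-algebra by $i_D(D)$ together with~$i_G$, the ideal must be all of $D \rtimes_\delta G$. The argument for~$\tilde p^\bot$ is symmetric.

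The main obstacle I anticipate is the bookkeeping in the corner identification: one must verify carefully that the ideal in $D \rtimes_\delta G$ generated by the range of $\rho_* - \sigma_*$ for the $D$\nb-data cuts down via~$\tilde p$ to precisely the analogous ideal for the $A$\nb-data. The bimodule-homomorphism property of $\rho_*$ and~$\sigma_*$ makes this work, but checking it rigorously requires tracking the action of~$\tilde p$ simultaneously as a multiplier of both $D \rtimes_\delta G$ and $D \rtimes_{\bar\delta}(H \rtimes_\acm G)$, and checking compatibility with the Haar-system-based convolution formulas in~\eqref{eq:rho_sigma_integrated}.
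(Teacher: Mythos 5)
Your proposal is correct and takes essentially the same route as the paper: form the linking algebra $D\rtimes_{\delta,w}\cm$ with the image of~$p$, identify the corners via the standard Morita equivalence for ordinary groupoid crossed products applied to $D\rtimes_\delta G$ and $D\rtimes_{\bar\delta}(H\rtimes_\acm G)$, and then descend to the coequaliser defining the crossed product by $(G,H)$. The paper's proof is just terser --- it cites the ordinary groupoid result and ``passes to the crossed products with $(G,H)$'' --- whereas you make explicit the compatibility of compression by~$\tilde p$ with $\rho_*$, $\sigma_*$ and the quotient, which is precisely the bookkeeping the paper leaves implicit.
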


\begin{proof}
  Let \((D,\delta,w)\) and \(p\in\Mult(D)\) implement the Morita equivalence between \(A\) and~\(B\).  Using the canonical strictly continuous unital embeddings of~\(\Mult(D)\) into \(\Mult(D\rtimes_\delta G)\) and \(\Mult\bigl(D\rtimes_{\bar\delta} (H\rtimes_\acm G)\bigr)\), we map \(p\) and~\(p^\bot\) to complementary projections in these crossed products.  The Morita equivalence of crossed products for ordinary groupoid actions amounts to the statement that these projections are again full and that the resulting corners are \(A\rtimes_\alpha G\), \(B\rtimes_\beta G\), \(A\rtimes_{\bar\alpha} (H\rtimes_\acm G)\), and \(B\rtimes_{\bar\beta} (H\rtimes_\acm G)\), respectively.  Passing now to the crossed products with \((G,H)\), we conclude that \(D\rtimes_{\delta,w} (G,H)\) and the image of~\(p\) in \(\Mult\bigl(D\rtimes_{\delta,w} (G,H)\bigr)\) provide a Morita equivalence between \(A\rtimes_\alpha (G,H)\) and \(B\rtimes_\beta (G,H)\).
\end{proof}

More explicitly, we may also describe Morita equivalence of crossed module actions by imprimitivity bimodules.  If \((D,\delta,w)\) and \(p\in\Mult(D)\) provide a Morita equivalence between \((A,\alpha,u)\) and \((B,\beta,v)\), then we may interpret \(\E\defeq pDp^\bot\) as an \(A,B\)-imprimitivity bimodule.  Like~\(D\), \(\E\) is a continuous field of Banach spaces over~\(X\).  Since~\(p\) is \(\delta\)\nb-invariant, the action of~\(G\) on~\(D\) restricts to an action~\(\gamma\) on~\(\E\).  This action satisfies the compatibility conditions
\begin{equation}
  \label{eq:action_Hilbert_module}
  \begin{alignedat}{2}
    \gamma_g(a\cdot\xi)&= \alpha_g(a)\cdot\gamma_g(\xi),&\qquad \alpha_g(\braket{\xi}{\eta}_A)&=
    \braket{\gamma_g(\xi)}{\gamma_g(\eta)}_A\\
    \gamma_g(\xi\cdot b)&= \gamma_g(\xi)\cdot\beta_g(b),&\qquad \beta_g(\braket{\xi}{\eta}_B)&= \braket{\gamma_g(\xi)}{\gamma_g(\eta)}_B,
  \end{alignedat}
\end{equation}
for all \(g\in G\) and \(a\), \(\xi\), \(\eta\), and~\(b\) in appropriate fibres of \(A\), \(\E\), and~\(B\).  Furthermore, the multipliers~\(w_h\) for \(h\in H\) restrict to multipliers of the fibres of~\(\E\).  But we do not have to specify these as additional data because we may write any element of~\(\E\) as a product \(a\cdot\xi\cdot b\), and~\(w_h\) is totally determined by \(u_h\) and~\(v_h\) in the sense that
\[
w_h\cdot (a\cdot\xi\cdot b) = (u_h\cdot a)\cdot \xi\cdot b,\qquad (a\cdot\xi\cdot b)\cdot w_h = a\cdot \xi\cdot (b\cdot v_h).
\]
This becomes \(w_h\cdot \xi=u_h\cdot\xi\) and \(\xi\cdot w_h = \xi\cdot v_h\) when we extend the \(A,B\)-bimodule structure on~\(\E\) to \(\Mult(A)\) and \(\Mult(B)\).  The action of~\(G\) must also satisfy
\begin{equation}
  \label{eq:triviality_condition_imprimitivity_bimodule}
  \gamma_{\tcm(h)}(\xi) = w_h\cdot \xi\cdot w_h^*
  = u_h\cdot \xi\cdot v_h^*
\end{equation}
for all \(h\in H_x\), \(\xi\in \E_x\), \(x\in X\).

Conversely, if we are given an \(A,B\)\nb-imprimitivity bimodule~\(\E\) with an action of~\(G\) that satisfies \eqref{eq:action_Hilbert_module} and~\eqref{eq:triviality_condition_imprimitivity_bimodule}, then the resulting linking algebra~\(D\) carries an action \((\delta,w)\) of \((G,H)\) for which the projection~\(p\) is \(\delta\)\nb-invariant.  Summing up:

\begin{proposition}
  \label{pro:Morita_via_bimodule}
  Two actions \((A,\alpha,u)\) and \((B,\beta,v)\) of~\(\cm\) are Morita equivalent if and only if there is an \(A,B\)-imprimitivity bimodule~\(\E\) with an action of~\(G\) that satisfies \eqref{eq:action_Hilbert_module} and~\eqref{eq:triviality_condition_imprimitivity_bimodule}.
\end{proposition}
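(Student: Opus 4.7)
The plan is to make the correspondence between linking-algebra pictures and imprimitivity-bimodule pictures explicit, then check in each direction that the extra crossed-module data passes back and forth without loss.

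For the forward direction, suppose $(D,\delta,w)$ together with a $\delta$-invariant projection $p\in\Mult(D)$ implements a Morita equivalence between $(A,\alpha,u)$ and $(B,\beta,v)$ in the sense of Definition~\ref{def:Morita_equivalence_equiv}. Set $\E\defeq pDp^\bot$, with its canonical $A,B$-imprimitivity bimodule structure obtained from the isomorphisms $pDp\cong A$ and $p^\bot Dp^\bot\cong B$ and the multiplications $\E\times\E^*\to pDp$, $\E^*\times\E\to p^\bot Dp^\bot$ in $D$. Because $p$ is $\delta$-invariant, the automorphisms $\delta_g$ preserve each of the four corners, so $\delta_g$ restricts to a map $\gamma_g\colon\E_{\source(g)}\to\E_{\range(g)}$. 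The compatibility relations~\eqref{eq:action_Hilbert_module} are immediate from the fact that $\delta_g$ is a $*$-homomorphism and that the bimodule operations on $\E$ are induced by multiplication in $D$. Finally, \eqref{eq:triviality_condition_imprimitivity_bimodule} reduces to the identity $\delta_{\tcm(h)}(\xi)=w_h\xi w_h^*$ for $\xi\in\E_x$, which in turn follows from $\alpha_{\tcm(h)}=\Ad(u_h)$ and $\beta_{\tcm(h)}=\Ad(v_h)$ together with the way $w_h$ restricts to $u_h\oplus v_h$ under the corner decomposition.

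For the converse, given an $A,B$-imprimitivity bimodule $\E$ over $X$ with a $G$-action $\gamma$ satisfying \eqref{eq:action_Hilbert_module} and~\eqref{eq:triviality_condition_imprimitivity_bimodule}, form the linking algebra
\[
D\defeq\begin{pmatrix}A&\E\\ \E^*&B\end{pmatrix},\qquad p\defeq\begin{pmatrix}1&0\\ 0&0\end{pmatrix}\in\Mult(D),
\]
which is naturally a $\Cst$-algebra over $X$ with fibres the corresponding linking algebras of $A_x$, $B_x$, $\E_x$. Define $\delta_g\colon D_{\source(g)}\to D_{\range(g)}$ block-wise by $\alpha_g$, $\gamma_g$, $\gamma_g$ (on $\E^*$), and $\beta_g$, and set $w_h\defeq\begin{pmatrix}u_h&0\\ 0&v_h\end{pmatrix}\in\Mult(D_x)$ for $h\in H_x$. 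That $\delta$ is a continuous $G$-action by $*$-automorphisms of $D$ follows from \eqref{eq:action_Hilbert_module}, and $w$ is a continuous bundle of unitary multipliers because $u$ and $v$ are. The relation $\delta_{\tcm(h)}=\Ad(w_h)$ holds on the diagonal corners by hypothesis on $(A,\alpha,u)$ and $(B,\beta,v)$, and on the off-diagonal corners it is exactly~\eqref{eq:triviality_condition_imprimitivity_bimodule}. The relation $\delta_g(w_h)=w_{\acm_g(h)}$ splits into its $A$- and $B$-components and holds there by assumption. By construction $p$ is $\delta$-invariant, and the two corners $pDp$, $p^\bot Dp^\bot$ reproduce $(A,\alpha,u)$ and $(B,\beta,v)$ equivariantly. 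Fullness of $p$ and $p^\bot$ is equivalent to $\E$ being a full imprimitivity bimodule.

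The two constructions are mutually inverse up to the obvious isomorphisms: starting from $(D,\delta,w,p)$, extracting $\E=pDp^\bot$ and reassembling the linking algebra gives back $(D,\delta,w,p)$; starting from $\E$, forming $D$ and then $pDp^\bot$ returns $\E$ with the same $G$-action. The main technical point, and the only place where one must be careful, is the continuity of $\delta$ and $w$ on the assembled $D$ viewed as a $\Cst$-algebra over $X$; this reduces to the continuity assumptions already built into the imprimitivity bimodule $\E$ as a continuous field over $X$ and into the actions $(A,\alpha,u)$ and $(B,\beta,v)$, so no genuinely new verification is required.
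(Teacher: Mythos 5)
Your argument is correct and takes essentially the same route as the paper: in one direction extract \(\E=pDp^\bot\) with the restricted \(G\)\nb-action, and in the other reassemble the linking algebra with the block\nb-diagonal multipliers \(w_h=u_h\oplus v_h\) and blockwise action \(\delta\). One small clarification: the identity \(\gamma_{\tcm(h)}(\xi)=w_h\xi w_h^*\) is not a consequence of \(\alpha_{\tcm(h)}=\Ad(u_h)\) and \(\beta_{\tcm(h)}=\Ad(v_h)\) (agreement on the diagonal corners does not determine an automorphism on the off-diagonal corner); it is part of the hypothesis that \((D,\delta,w)\) is an action of \(\cm\), and the corner decomposition of \(w_h\) --- which uses \(\delta_{\tcm(h)}(p)=p\) and the \((G,H)\)-equivariance of the isomorphisms \(pDp\cong A\), \(p^\bot Dp^\bot\cong B\) --- then converts it into \eqref{eq:triviality_condition_imprimitivity_bimodule}.
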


Dropping the conditions involving \((A,\alpha,u)\) in the above definitions and the fullness of the \(B\)\nb-valued inner product, we get a \emph{\((G,H)\)\nb-equivariant right Hilbert \(B\)\nb-module}.  This is a Hilbert \(B\)\nb-module with an action~\(\gamma\) of~\(G\) satisfying the second half of~\eqref{eq:action_Hilbert_module}.  Equation~\eqref{eq:triviality_condition_imprimitivity_bimodule} now becomes a definition: \(w_h\cdot \xi \defeq \gamma_{\tcm(h)}(\xi)\cdot v_h\) for all \(h\in H_x\), \(\xi\in\E_x\).  It is easy to check that this defines a unitary, hence adjointable, operator on~\(\E_x\) and that \(h\mapsto w_h\) defines a continuous homomorphism from~\(H\) to the group bundle of unitary multipliers of the fibres~\(\E_x\) of~\(\E\).  The induced action on \(\Comp(\E)\) and the unitaries~\(w_h\) define an action of~\(\cm\) on \(\Comp(\E)\).  If~\(\E\) is full for the \(B\)\nb-valued inner product, then~\(\E\) becomes a \((G,H)\)-equivariant \((\Comp(\E),B)\)-imprimitivity bimodule.

We may also turn \(\Cst\)\nb-algebras into a category using isomorphism classes of correspondences as arrows instead of \Star{}homomorphisms.  The equivariant analogue of a correspondence is defined as follows: an \emph{equivariant correspondence} from \((A,\alpha,u)\) to \((B,\beta,v)\) is a \((G,H)\)-equivariant non-degenerate \Star{}homomorphism from \((A,\alpha,u)\) to the algebra of adjointable operators on an equivariant Hilbert module over \((B,\beta,v)\).  It is easy to see that an equivariant correspondence from \((A,\alpha,u)\) to \((B,\beta,v)\) induces a correspondence from \(A\rtimes_{\alpha,u} (G,H)\) to \(B\rtimes_{\beta,v} (G,H)\).

We finish our discussion of formal properties of the crossed product with a discussion of its exactness.  First we must define extensions of \(\Cst\)\nb-algebra actions of~\(\cm\).  Let~\((A,\alpha,u)\) be a \(\Cst\)\nb-algebra with an action of~\(\cm\) and let \(I\triangleleft A\) be an ideal in~\(A\) that is invariant under the action of~\(G\).  Then~\(\alpha\) induces actions \(\alpha_I\) and~\(\alpha_{A/I}\) on \(I\) and~\(A/I\).  Any multiplier of~\(A\) restricts to a multiplier of~\(I\), that is, \(I\) is still an ideal in \(\Mult(A)\).  Thus multipliers of~\(A\) descend to multipliers of~\(A/I\).  Thus~\(u\) also yields fibrewise unitary multipliers of \(I\) and~\(A/I\).  This produces actions \((\alpha_I,u_I)\) and \((\alpha_{A/I},u_{A/I})\) of \((G,H,\tcm,\acm)\) on \(I\) and~\(A/I\), respectively.

\begin{definition}
  \label{def:extension_actions}
  A diagram \(K\to E\to Q\) of \(\Cst\)\nb-algebra actions of~\(\cm\) is called an \emph{extension} if it is isomorphic to a diagram of the form
  \[
  (I,\alpha_I,u_I) \to (A,\alpha,u) \to (A/I,\alpha_{A/I},u_{A/I})
  \]
  for some \(\Cst\)\nb-algebra action~\((A,\alpha,u)\) and some \(G\)\nb-invariant ideal~\(I\).
\end{definition}

\begin{proposition}
  \label{pro:crossed_exact}
  The crossed product functor \((A,\alpha,u)\mapsto A\rtimes_{\alpha,u} \cm\) is exact, that is, it maps extensions of \(\Cst\)\nb-algebra actions of~\(\cm\) to extensions of \(\Cst\)\nb-algebras.
\end{proposition}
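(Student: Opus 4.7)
The plan is to deduce this from exactness of the ordinary crossed product functors by the locally compact groupoids $G$ and $H\rtimes_\acm G$, applied to the equivariant extension $I\to A\to A/I$.  Since the action of $H\rtimes_\acm G$ on $A$ factors through that of $G$ via $(h,g)\mapsto\tcm(h)g$, the $G$\nobreakdash-invariant ideal $I$ is also invariant under $H\rtimes_\acm G$, so exactness of the ordinary crossed product functor for each of the two groupoids yields two short exact sequences $I\rtimes_{\alpha_I} G\into A\rtimes_\alpha G\onto (A/I)\rtimes_{\alpha_{A/I}} G$, and similarly with $G$ replaced by $H\rtimes_\acm G$.  By inspection of the formulas~\eqref{eq:rho_sigma_integrated}, both $\rho_*$ and $\sigma_*$ are natural in $(A,\alpha,u)$, so the two sequences assemble into a commutative ladder joined by parallel vertical pairs $\rho_*,\sigma_*$.

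Write $J_X\subseteq X\rtimes G$ for the closed ideal generated by the range of $\rho_*^X-\sigma_*^X$, so that $X\rtimes\cm=(X\rtimes G)/J_X$ by Definition~\ref{def:crossed_product_groupoid}.  Surjectivity of $A\rtimes\cm\onto (A/I)\rtimes\cm$ and exactness in the middle should follow by a routine diagram chase: naturality forces the surjection $A\rtimes G\onto (A/I)\rtimes G$ to map $J_A$ onto $J_{A/I}$ (since the horizontal map $A\rtimes(H\rtimes_\acm G)\onto (A/I)\rtimes(H\rtimes_\acm G)$ is surjective, and continuous *-homomorphic images of closed ideals are closed ideals), and then the preimage of $J_{A/I}$ in $A\rtimes G$ equals $J_A+(I\rtimes G)$, so the kernel of $A\rtimes\cm\to (A/I)\rtimes\cm$ is exactly the image of $I\rtimes G$, hence of $I\rtimes\cm$.

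The remaining step---injectivity of $I\rtimes\cm\into A\rtimes\cm$---is the main obstacle; it amounts to showing $J_I=J_A\cap(I\rtimes G)$.  The inclusion $J_I\subseteq J_A\cap(I\rtimes G)$ is clear from naturality.  For the converse I plan to use the bimodule property stated just before Example~\ref{exa:crossed_product_Green}: the range of $\phi_A\defeq \rho_*^A-\sigma_*^A$ is already an ideal, so $J_A=\cl{\phi_A\bigl(A\rtimes(H\rtimes_\acm G)\bigr)}$, and $\phi_A(xby)=x\phi_A(b)y$ for $x,y\in A\rtimes G$.  Given $y\in J_A\cap(I\rtimes G)$, write $y=\lim_n\phi_A(b_n)$ with $b_n\in A\rtimes(H\rtimes_\acm G)$ and pick an approximate unit $(e_\lambda)$ in $I$.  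Via the embedding $A\rtimes G\into\Mult\bigl(A\rtimes(H\rtimes_\acm G)\bigr)$ and $(H\rtimes_\acm G)$\nobreakdash-invariance of $I$, the compressions $e_\lambda b_n e_\lambda$ lie in $I\rtimes(H\rtimes_\acm G)$, and the bimodule property together with $\phi_A|_{I\rtimes(H\rtimes_\acm G)}=\phi_I$ yields
\[
e_\lambda\phi_A(b_n)e_\lambda=\phi_A(e_\lambda b_n e_\lambda)=\phi_I(e_\lambda b_n e_\lambda)\in J_I.
\]
Letting $n\to\infty$ and using that $J_I$ is closed gives $e_\lambda y e_\lambda\in J_I$; letting $\lambda\to\infty$, noting that $(e_\lambda)$ is an approximate unit for the ideal $I\rtimes G$ in which $y$ lies, yields $y\in J_I$.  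Everything outside this approximate-unit compression is essentially formal once the two horizontal exactness statements and the naturality of $\rho_*,\sigma_*$ are in hand.
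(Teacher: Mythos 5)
Your proof is correct, but it takes a genuinely different route from the paper's.  Both arguments rest on the same inputs --- exactness of the ordinary crossed product functors for \(G\) and \(H\rtimes_\acm G\) as in~\eqref{eq:exact_crossed_groupoid}, together with naturality of \(\rho_*\) and~\(\sigma_*\) --- but the paper then argues with representations: a nondegenerate representation of \(I\rtimes(G,H)\), viewed as a representation~\(f\) of \(I\rtimes G\) with \(f\circ\rho_*=f\circ\sigma_*\), extends to \(A\rtimes G\) because \(I\rtimes G\) is an ideal, the extension still equalises \(\rho_*\) and~\(\sigma_*\) because both composites extend the same representation of \(I\rtimes(H\rtimes_\acm G)\), and this yields injectivity; exactness in the middle is obtained dually by descending representations of \(A\rtimes(G,H)\) that kill \(I\rtimes(G,H)\).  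You instead work directly with the coequaliser ideals, proving \(q(J_A)=J_{A/I}\), \(q^{-1}(J_{A/I})=J_A+(I\rtimes G)\), and --- the crucial step --- \(J_A\cap(I\rtimes G)=J_I\) by the bimodule property of \(\rho_*-\sigma_*\) combined with compression by an approximate unit of~\(I\).  This is a sound, more algebraic substitute for the paper's representation-extension argument (which implicitly establishes the same ideal identity), and it has the advantage of avoiding the nondegeneracy bookkeeping that extending representations from ideals requires; the paper's route, in turn, avoids your approximate-unit computation.  Two points you should spell out, though neither is a real gap: the paper states the bimodule property only for \(x,y\in A\rtimes_\alpha G\), whereas you compress by \(e_\lambda\in I\subseteq A\) acting as a multiplier --- this is fine because \(\rho_*\) and~\(\sigma_*\) also intertwine the canonical \(A\)\nb-bimodule structures (both have the canonical embedding of~\(A\) as their \(A\)\nb-part; verify it on \(\Contc(H\rtimes_\acm G,A)\), or extend the stated property to \(\Mult(A\rtimes_\alpha G)\) by strict continuity), and the same computation on \(\Contc\) shows \(e_\lambda b_n e_\lambda\in I\rtimes(H\rtimes_\acm G)\); and the fact that an approximate unit of~\(I\) is an approximate unit for \(I\rtimes G\) deserves a one-line check on \(\Contc(G,I)\) (approximate by elementary tensors and use \(\norm{e_\lambda}\le1\)).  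With these routine verifications made explicit, your argument is complete.
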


\begin{proof}
  It is well-known that the full crossed product functor for groupoid crossed products is exact.  Therefore, we get exact sequences
  \begin{equation}
    \label{eq:exact_crossed_groupoid}
    \begin{gathered}
      I\rtimes_{\alpha_I} G \into A\rtimes_{\alpha} G \onto
      A/I\rtimes_{\alpha_{A/I}} G,\\
      I\rtimes_{\bar\alpha_I} (H\rtimes_\acm G) \into A\rtimes_{\bar\alpha} (H\rtimes_\acm G) \onto A/I\rtimes_{\bar\alpha_{A/I}} (H\rtimes_\acm G).
    \end{gathered}
  \end{equation}
  It follows immediately that the map \(A\rtimes (G,H) \to (A/I)\rtimes (G,H)\) is surjective.

  Next we show that the map \(\pi\colon I\rtimes (G,H) \to A\rtimes (G,H)\) is injective, by showing that any representation of \(I\rtimes (G,H)\) extends to one of \(A\rtimes (G,H)\).

  A representation of \(I\rtimes (G,H)\) is nothing but a representation~\(f\) of \(I\rtimes G\) with \(f\circ\sigma_* = f\circ \rho_*\), where \(\sigma_*\) and~\(\rho_*\) are the maps in \eqref{eq:coequaliser_ingredients} and~\eqref{eq:rho_sigma_integrated}.  The representation~\(f\) extends uniquely to a representation~\(\bar{f}\) of \(A\rtimes G\) because \(I\rtimes G\) is an ideal in \(A\rtimes G\) (recall that representation means non-degenerate \Star{}representation throughout this article).  We have \(\bar{f}\circ\sigma_* = \bar{f}\circ\rho_*\) on \(A\rtimes (H\rtimes_\acm G)\) because both sides extend the same representation of \(I\rtimes (H\rtimes_\acm G)\).  Hence~\(\bar{f}\) descends to a representation of \(A\rtimes (G,H)\).  As a result, any representation of \(I\rtimes (G,H)\) extends uniquely to a representation of \(A\rtimes (G,H)\).  This shows that the map \(I\rtimes (G,H) \to A\rtimes (G,H)\) is injective.

  Finally, we verify exactness in the middle by checking that a representation of \(A\rtimes (G,H)\) that vanishes on \(I\rtimes (G,H)\) descends to a representation of \(A/I\rtimes (G,H)\).  We may view the representation of \(A\rtimes (G,H)\) as a representation~\(f\) of \(A\rtimes G\) with \(f\circ\rho_* = f\circ\sigma_*\).  The vanishing on \(I\rtimes (G,H)\) is equivalent to~\(f\) vanishing on \(I\rtimes G\) because \(I\rtimes (G,H)\) is a quotient of \(I\rtimes G\).  Hence~\(f\) descends to a representation~\(\dot{f}\) of \(A/I\rtimes G\) by the exactness of~\eqref{eq:exact_crossed_groupoid}.  The latter satisfies \(\dot{f}\circ\rho_* = \dot{f}\circ\sigma_*\) on \((A/I)\rtimes (H\rtimes_\acm G)\) because \(f\circ\rho_* = f\circ\sigma_*\) holds on \(A\rtimes (H\rtimes_\acm G)\).  Hence~\(\dot{f}\) descends to a representation of \((A/I)\rtimes (H\rtimes_\acm G)\), as desired.
\end{proof}

\section{An example}
\label{sec:example}

In this section, we consider actions on \(\Cst\)\nb-algebras and crossed products for a special case of the construction in~\eqref{exa:crossed_module_groupoid_isotropy}.  We also describe a candidate for a universal proper action in this case.

A \(\Z\)\nb-action is determined by the action of its generator \(1\in\Z\), and~\(1\) acts by a homeomorphism~\(\varphi\). Hence a \(\Z\)\nb-action is equivalent to a single homeomorphism~\(\varphi\).  We take our space to be~\(\R\) and
\[
\varphi\colon \R\to \R,\qquad x\mapsto
\begin{cases}
  x&\text{if \(x\le0\),}\\
  x/2&\text{if \(x\ge0\).}
\end{cases}
\]
The precise formula for the homeomorphism~\(\varphi\) is not important. We will only use that \(\varphi(x)=x\) for \(x\le 0\).  We consider the transformation groupoid \(G=(\R\rtimes_\varphi \Z \rightrightarrows \R)\), where~\(\Z\) acts by \(n \cdot x = \varphi^{(n)} (x)\).

The interior isotropy group bundle~\(H\) consists of arrows whose germs act identically on some neighbourhood.  Here \(H=\R_{<0}\times\Z \cup \R_{\le0}\times\{0\}\), that is, the groups~\(H_x\) are~\(\Z\) for \(x<0\) and \(\{0\}\) for \(x\ge0\).  Thus the locally Hausdorff quotient groupoid~\(G/H\) is indeed non-Hausdorff at \(\{0\}\times\Z\), and Hausdorff over \(\R_{>0}\) and~\(\R_{<0}\).  This groupoid is already considered by Alain Connes in~\cite{Connes:Survey_foliations}*{\S6} to illustrate the difficulties that appear for non-Hausdorff holonomy groupoids of foliations like the Reeb foliation.

Let~\(\cm\) be the crossed module \(H\to G\).  What are continuous actions of~\(\cm\) on \(\Cst\)\nb-algebras?  First, the underlying \(\Cst\)\nb-algebra must be a \(\Cst\)\nb-algebra~\(A\) over~\(\R\); let \(A(U)\) for an open subset \(U\subseteq\R\) be the ideal in~\(A\) spanned by \(\Cont_0(U)\cdot A\).

Since~\(G\) is a transformation group for a \(\Z\)\nb-action, an action~\(\alpha\) of the groupoid~\(G\) on~\(A\) amounts to an action \(\alpha\colon \Z\to\Aut(A)\) that is compatible with the action of~\(\Z\) on~\(\R\); more precisely, this means that~\(\alpha_n\) for \(n\in\Z\) descends to maps \(A_x\to A_{\varphi^n(x)}\) for all \(x\in\R\).  Of course, the action~\(\alpha\) is determined by the single automorphism \(\alpha_1\colon A\to A\), which must map \(A_x \to A_{\varphi(x)}\) for all \(x\in\R\).

A continuous action of~\(\cm\) consists of a continuous action of~\(G\) together with a continuous representation of the group bundle \(H=\R_{<0}\times\Z\) by unitary multipliers of the fibres of~\(A\).  Equivalently, we must specify a single unitary multiplier~\(u_1\) of \(A(\R_{<0})\).  The compatibility conditions for actions of~\(\cm\) become \(\alpha_1(a) = u_1 a u_1^*\) for all \(a\in A(\R_{<0})\): the other condition \(\alpha_1(u_1)=u_1\) follows.

Now we describe the crossed product for an action \((\alpha,u)\) of~\(\cm\) on a \(\Cst\)\nb-algebra~\(A\) over~\(\R\), using the long exact sequence of Proposition~\ref{pro:crossed_exact}.  Let \(A_<\) and~\(A_\ge\) be the restrictions of~\(A\) to \(\R_{<0}\) and~\(\R_{\ge0}\), again viewed as \(\Cst\)\nb-algebras over~\(\R\).  Thus~\(A_<\) is a \(\Z\)\nb-invariant ideal in~\(A\) with quotient \(A/A_<\cong A_\ge\).  We get an induced exact sequence
\[
A_<\rtimes\cm \into A\rtimes\cm \onto A_\ge \rtimes\cm.
\]
The crossed product \(A_\ge\rtimes\cm\) is simply isomorphic to \(A_\ge\rtimes_\alpha \Z\) because~\(H\) only occurs over~\(\R_{<0}\).  For \(A_<\rtimes\cm\), only the restriction of~\(\cm\) to~\(\R_{<0}\) is relevant, and in this region \(\tcm\colon H\to G|_{\R_{<0}}\) is an isomorphism.  It follows that the map \(A_<\rtimes\Z\to A_<\) induced by the covariant representation \((\Id_{A_<},u)\) induces an isomorphism \(A_<\rtimes\cm \cong A_<\).  Thus \(A\rtimes\cm\) is an extension
\[
A_< \into A\rtimes\cm \onto A_\ge\rtimes_\alpha \Z.
\]
Up to isomorphism \(A\rtimes\cm\) is determined by the Busby invariant of this extension.  This \Star{}homomorphism \(A_\ge\rtimes_\alpha\Z \to \Mult(A_<)/A_<\) must come from a covariant pair of representations of~\(A_\ge\) and~\(\Z\).  The representation of~\(A_\ge\) is simply the Busby invariant of the extension \(A_<\into A\onto A_\ge\) because the maps \(A_? \to \Mult(A_?\rtimes\cm)\) are natural; the representation of~\(\Z\) is~\(u\).

We get the \(\Cst\)\nb-algebra \(\Cst(\cm)\) if we let~\(A\) be \(\Cont_0(\R)\) with the action of~\(\Z\) induced by~\(\varphi\).  Using our explicit description of \(A\rtimes\cm\), it is easy to check that \(\Cst(\cm)\) is the \(\Cst\)\nb-algebra of the locally Hausdorff groupoid \(G/H\) defined in~\cite{Connes:Survey_foliations}.

Finally, we describe an interesting action of~\(\cm\) on a locally compact groupoid~\(K\) that should play the role of the universal proper action for~\(\cm\).  The groupoid~\(K\) comes from an equivalence relation on the space
\[
K^{(0)} \defeq \R^3  \quad \sqcup\quad \R_{<0}\times\C,
\]
which we map to \(G^{(0)}=H^{(0)} = \R\) by projecting to the first coordinate on both components.  The equivalence relation \(K^{(1)}\subseteq K^{(0)}\times K^{(0)}\) is the equivalence relation generated by
\[
\R^3\supset \R_{<0} \times \R^2 \ni (t,x,y) \sim (t, \Eul^{y+2\pi\ima x}) \in \R_{<0}\times\C.
\]
That is, two elements \((t,x,y)\) and \((t',x',y')\) of~\(\R^3\) are equivalent if and only if they are equal or \(t=t'<0\), \(x-x'\in\Z\), and \(y=y'\); two elements of \(\R_{<0}\times\C\) are equivalent if and only if they are equal, and \((t,x,y)\in\R^3\) and \((t',z)\in\R_{<0}\times\C\) are equivalent if and only if \(t=t'\) and \(z= \Eul^{y+2\pi\ima x}\).

The space of equivalence classes of~\(K\) is a locally Hausdorff space over~\(\R\) with fibre~\(\C\) for \(t<0\) and~\(\R^2\) for \(t\ge0\); these are glued in a non-Hausdorff way at~\(0\), using the covering map \(\R^2 \to \R/\Z\times\R \cong \C^*\), \((x,y)\mapsto \Eul^{y+2\pi\ima x}\).

We let~\(\Z\) act on~\(K^{(0)}\) by
\[
\hat\varphi^{(n)}(t,x,y) \defeq (\varphi^{(n)}(t),x+n,y),\qquad
\text{for all \((t,x,y)\in\R^3\), \(n\in\Z\),}
\]
and \(\hat\varphi^{(n)}(t,z) \defeq (t,z)\) for all \((t,z)\in\R_{<0}\times\C\), \(n\in\Z\).  The observations in Example~\ref{exa:action_on_equivalence_relation} show that this defines an action of~\(\cm\) on~\(K\) because the action preserves the equivalence relation and \(\hat\varphi(k)\sim k\) for all \(k\in K^{(0)}\).

The crossed product \(\Cst(K)\rtimes\cm\) for the induced action on \(\Cst(K)\) is naturally isomorphic to the \(\Cst\)\nb-algebra of the crossed module of topological groupoids \(K\rtimes\cm = (K\rtimes\cm)_2 \to (K\rtimes\cm)_1\) --~this generalises the observation that \(\Cont_0(X)\rtimes G \cong \Cst(X\rtimes G)\) for a groupoid action on a space.  The groupoid \((K\rtimes\cm)_1\) has object space~\(K^{(0)}\).  The arrow space is
\begin{multline*}
  (K\rtimes\cm)^{(1)} = \{(k,n,k')\in K^{(0)}\times\Z\times K^{(0)} \mid k\sim \hat\varphi^{(n)}(k')\}
  \\\cong K^{(0)} \times \Z \times_{\hat\varphi^{(-)}(-), K^{(0)}, \range} K^{(1)},
\end{multline*}
with range and source maps \(\range(k,n,k') \defeq k\), \(\source(k,n,k') \defeq k'\), and the composition \((k,n,k')\cdot (k',m,k'') \defeq (k,n+m,k'')\).  This yields arrows \((t,x,y) \leftrightarrow (\varphi^{(n)}(t),x+n,y)\) for \(n\in\Z\) and \(t\ge0\), and arrows \((t,x,y)\leftrightarrow (t,x',y) \leftrightarrow (t,z)\) if \(t\in\R_{<0}\), \(x,x',y\in\R\), \(z=\Eul^{y+2\pi i x} \in\C\), \(x-x'\in\Z\).  The isotropy groups are trivial for \(t\ge0\) and~\(\Z\) for \(t<0\).  Finally, \((K\rtimes\cm)_2\) is exactly this isotropy bundle of \((K\rtimes\cm)_1\), which is a closed and open subset of the space of arrows in~\((K\rtimes\cm)_1\).

Since~\((K\rtimes\cm)_2\) is closed in the arrow space of \((K\rtimes\cm)_1\), the crossed module \(K\rtimes\cm\) is equivalent to the groupoid \(K'\defeq (K\rtimes\cm)_1/(K\rtimes\cm)_2\), that is, to the groupoid of the orbit equivalence relation for~\(K\rtimes\cm\).  We do not explain the relevant notion of equivalence here, but merely observe that \(\Cst(K\rtimes\cm)\) is canonically isomorphic to the \(\Cst\)\nb-algebra of the locally compact groupoid~\(K'\).

The groupoid~\(K'\) comes from the equivalence relation on~\(K^{(0)}\) generated by the \(\Z\)\nb-action and the equivalence relation~\(K^{(1)}\) together, that is, \(k_1 \sim' k_2\) if and only if there is \(n\in\Z\) with \(k_1\sim \hat\varphi^{(n)}(k_2)\).  This relation is closed in \(K^{(0)}\times K^{(0)}\).  Hence the groupoid~\(K'\) is free and proper, so that \(\Cst(K')\) is Morita--Rieffel equivalent to the algebra of \(\Cont_0\)\nb-functions on the space~\(K''\) of equivalence classes of~\(K'\).  It remains to describe this quotient space.

The equivalence relation on \(\R^3\subseteq K^{(0)}\) is simply \((t,x,y)\sim (\varphi^{(n)}(t),x+n,y)\) for all \((t,x,y)\in\R^3\), \(n\in\Z\).  The resulting space of equivalence relations from this piece is \(T_\varphi\times\R\), where~\(T_\varphi\) denotes the mapping torus of~\(\varphi\).  The equivalence relation on \(\R_{<0}\times\C\) is still trivial, and we glue together \(T_\varphi\times\R\) and \(\R_{<0}\times\C\) by identifying \([(t,x,y)] \sim (t,\Eul^{y+2\pi\ima x})\) for all \((t,x,y)\in\R^3\) with \(t<0\).  Equivalently, we identify the restriction \(T_\varphi|_{t<0}\times\R\) with \(\R_{<0}\times\R/\Z\times\R\cong \R_{<0}\times \C^*\) and glue \(T_\varphi\times\R\) and \(\R_{<0}\times\C\) along this common open subset.

The above computation shows that the topological \(2\)\nb-groupoid \(K\rtimes\cm\) is equivalent to an ordinary topological space~\(K''\).  This means that~\(\cm\) acts freely and properly on~\(K\).  Furthermore, the orbit space of~\(K\) is locally Hausdorff and locally quasi-compact and carries a free and proper action of the locally Hausdorff quotient groupoid~\(G/H\) in the sense of~\cite{Tu:Non-Hausdorff}.  The fibres of the orbit space of~\(K\) over~\(\R\) are either \(\C\) or~\(\R^2\) and hence contractible.  This is a necessary condition for a proper action being universal (see~\cite{Baum-Connes-Higson:BC}), which is also sufficient for torsion-free groups under some additional technical assumptions.  We have not yet studied criteria for universal proper actions of crossed modules on groupoids, but it seems plausible that the space~\(K\) will turn out to be a universal proper action of~\(\cm\).  Thus~\(K''\) plays the role of the classifying space of~\(\cm\).

\section{Summary and outlook}
\label{sec:outlook}

We have encoded non-Hausdorff symmetry groups of \(\Cst\)\nb-algebras using crossed modules.  Actions of crossed modules generalise twisted actions in the sense of Green.  We have defined crossed products for crossed module actions and computed several examples, and we checked some formal properties of such crossed products.

Crossed modules of groups and groupoids are equivalent to strict \(2\)\nb-groups and \(2\)\nb-groupoids, respectively.  We will explore this in~\cite{Buss-Meyer-Zhu:Higher_twisted} to understand notions such as Busby--Smith twisted actions, outer equivalence, and saturated Fell bundles from the point of view of higher category theory.

A long term goal of this project is to desingularise locally Hausdorff, \'etale groupoids, replacing them by Hausdorff objects with an additional layer of arrows between arrows.  This provides us with more actions of non-Hausdorff groupoids on \(\Cst\)\nb-algebras, including actions that deserve to be called proper.  This is a prerequisite for extending the Dirac dual Dirac method to non-Hausdorff groupoids.

To progress towards the Baum--Connes conjecture along these lines, we first have to replace crossed modules by more general weak \(2\)\nb-groupoids because only very special locally Hausdorff groupoids are obtained from crossed modules.  We plan to study actions of weak \(2\)\nb-groupoids and crossed products for such actions in a sequel.

Another problem largely independent from this is to construct universal proper actions of crossed modules (or weak \(2\)\nb-groupoids).  This would allow to formulate a Baum--Connes conjecture for crossed modules.  Already the case of the crossed module \(\Z\to\Torus\) that acts on rotation algebras seems interesting.

The most important general result about Green twisted actions is that such actions may be strictified by passing to a Morita equivalent \(\Cst\)\nb-algebra (see~\cite{Echterhoff:Morita_twisted}).  We plan a sequel studying generalisations of this to crossed modules.

\begin{bibdiv}
  \begin{biblist}
\bib{Baez:Introduction_n}{article}{
  author={Baez, John C.},
  title={An introduction to $n$\nobreakdash -categories},
  conference={ title={Category theory and computer science}, address={Santa Margherita Ligure}, date={1997}, },
  book={ series={Lecture Notes in Comput. Sci.}, volume={1290}, publisher={Springer}, place={Berlin}, },
  date={1997},
  pages={1--33},
  review={\MRref {1640335}{99h:18008}},
}

\bib{Baum-Connes-Higson:BC}{article}{
  author={Baum, Paul},
  author={Connes, Alain},
  author={Higson, Nigel},
  title={Classifying space for proper actions and \(K\)\nobreakdash -theory of group \(C^*\)\nobreakdash -algebras},
  conference={ title={\(C^*\)\nobreakdash -Algebras: 1943--1993}, address={San Antonio, TX}, date={1993}, },
  book={ series={Contemp. Math.}, volume={167}, publisher={Amer. Math. Soc.}, place={Providence, RI}, },
  date={1994},
  pages={240--291},
  review={\MRref {1292018}{96c:46070}},
}

\bib{Buss-Meyer-Zhu:Higher_twisted}{article}{
  author={Buss, Alcides},
  author={Meyer, Ralf},
  author={Zhu, Chenchang},
  title={A higher category approach to twisted actions on \(C^*\)\nobreakdash -algebras},
  status={eprint},
  note={\arxiv {0908.0455}},
  date={2009},
}

\bib{Connes:Survey_foliations}{article}{
  author={Connes, Alain},
  title={A survey of foliations and operator algebras},
  conference={ title={Operator algebras and applications, Part I}, address={Kingston, Ont.}, date={1980}, },
  book={ series={Proc. Sympos. Pure Math.}, volume={38}, publisher={Amer. Math. Soc.}, place={Providence, R.I.}, },
  date={1982},
  pages={521--628},
  review={\MRref {679730}{84m:58140}},
}

\bib{Crane-Sheppeard:Poincare}{article}{
  author={Crane, Louis},
  author={Sheppeard, Marni D.},
  title={$2$\nobreakdash -categorical Poincar\'e representations and state sum applications},
  status={eprint},
  note={\arxiv {math/0306440}},
  date={2003},
}

\bib{Echterhoff:Morita_twisted}{article}{
  author={Echterhoff, Siegfried},
  title={Morita equivalent twisted actions and a new version of the Packer--Raeburn stabilization trick},
  journal={J. London Math. Soc. (2)},
  volume={50},
  date={1994},
  number={1},
  pages={170--186},
  issn={0024-6107},
  review={\MRref {1277761}{96a:46118}},
}

\bib{Green:Local_twisted}{article}{
  author={Green, Philip},
  title={The local structure of twisted covariance algebras},
  journal={Acta Math.},
  volume={140},
  date={1978},
  number={3-4},
  pages={191--250},
  issn={0001-5962},
  review={\MRref {0493349}{58\,\#12376}},
}

\bib{MacLane-Whitehead:3-type}{article}{
  author={MacLane, Saunders},
  author={Whitehead, John Henry Constantine},
  title={On the $3$\nobreakdash -type of a complex},
  journal={Proc. Nat. Acad. Sci. U. S. A.},
  volume={36},
  date={1950},
  pages={41--48},
  review={\MRref {0033519}{11,450h}},
}

\bib{Noohi:two-groupoids}{article}{
  author={Noohi, Behrang},
  title={Notes on 2\nobreakdash -groupoids, 2\nobreakdash -groups and crossed modules},
  journal={Homology, Homotopy Appl.},
  volume={9},
  date={2007},
  number={1},
  pages={75--106 (electronic)},
  issn={1532-0073},
  review={\MRref {2280287}{2007m:18006}},
}

\bib{Paterson:Groupoids}{book}{
  author={Paterson, Alan L. T.},
  title={Groupoids, inverse semigroups, and their operator algebras},
  series={Progress in Mathematics},
  volume={170},
  publisher={Birkh\"auser Boston Inc.},
  place={Boston, MA},
  date={1999},
  pages={xvi+274},
  isbn={0-8176-4051-7},
  review={\MRref {1724106}{2001a:22003}},
}

\bib{Renault:Representations}{article}{
  author={Renault, Jean},
  title={Repr\'esentation des produits crois\'es d'alg\`ebres de groupo\"\i des},
  language={French},
  journal={J. Operator Theory},
  volume={18},
  date={1987},
  number={1},
  pages={67--97},
  issn={0379-4024},
  review={\MRref {912813}{89g:46108}},
}

\bib{Tu:Non-Hausdorff}{article}{
  author={Tu, Jean-Louis},
  title={Non-Hausdorff groupoids, proper actions and $K$\nobreakdash -theory},
  journal={Doc. Math.},
  volume={9},
  date={2004},
  pages={565--597},
  issn={1431-0635},
  review={\MRref {2117427}{2005h:22004}},
}

\bib{Whitehead:Combinatorial_homotopy_II}{article}{
  author={Whitehead, John Henry Constantine},
  title={Combinatorial homotopy. II},
  journal={Bull. Amer. Math. Soc.},
  volume={55},
  date={1949},
  pages={453--496},
  issn={0002-9904},
  review={\MRref {0030760}{11,48c}},
}

  \end{biblist}
\end{bibdiv}

\end{document}